\def\eqn#1{\begin{equation}#1\end{equation}}
\newcommand{\bearno}{\begin{eqnarray*}}
\newcommand{\enarno}{\end{eqnarray*}}
 \def\Xi{X^{\infty}}
\def\Ind#1{I(#1)}
\def\th{\theta}
\def\d{\mathrm{d}}
\def\P{{\mathbb P}}   
\def\E{{\mathbb E}}   \def\R{{\mathbb R}}
 \def\BB{\mathcal{B}} 
\def\FF{\mathcal{F}}  
 \def\TT{\mathcal{T}}  
\def\Px
  \def\Pgh{\overline{\P}^{G}}
\def\Egh{\overline{\E}^{G}}
\newcommand{\exit}{{\mbox{\, \vspace{3mm}}}\hfill\mbox{$\square$}}
\newtheorem{Thm}{Theorem}
\newtheorem{Lemma}{Lemma}
\newtheorem{Rem}{Remark} 
 \newtheorem{Problem}{Problem}
\title{Multivariate L\'evy-type drift change detection 
and mortality modeling}
\author{Micha{\l}  Krawiec}
\address{Mathematical Insititute, University of Wroc\l aw, pl. Grunwaldzki 2/4, 50-384 Wroc\l aw, Poland}\email{Michal.Krawiec@math.uni.wroc.pl}
\author{Zbigniew Palmowski}
\address{Faculty of Pure and Applied Mathematics, Wroc\l aw University of Science and Technology, Wroc\l aw, Poland} \email{Zbigniew.Palmowski@pwr.edu.pl}
\thanks{
This work is partially supported by National Science Centre, Poland, under grants
No. 2018/29/B/ST1/00756 (2019-2022) and 2016/23/N/HS4/02106 (2017-2020).}
\date{\today}
\subjclass[2010]{60G-40, 34B-60, 60G51, 62P-05} %
\keywords{}
\begin{document}
\begin{abstract}
In this paper we give a solution to the quickest drift change detection problem for a multivariate L\'evy process consisting of both continuous (Gaussian) and jump components in the Bayesian approach. We do it for a general 0-modified continuous prior distribution of the change point.
Classically, our criterion of optimality is based on a probability of false alarm and an expected delay of the detection, which is then reformulated in terms of a posterior probability of the change point. We find a generator of the posterior probability, which in case of general prior distribution is inhomogeneous in time. The main solving technique uses the optimal stopping theory and is based on solving a certain free-boundary problem. We also construct a Generelized Shiryaev-Roberts statistic, which can be used for applications. The paper is supplemented by two examples, one of which is further used to analyze Polish life tables (after proper calibration) and detect the drift change in the correlated force of mortality of men and women jointly.

\vspace{3mm}

\noindent {\sc Keywords.}
L\'evy process $\star$ multidimensional jump-diffusion $\star$ quickest detection $\star$ optimal stopping $\star$ change of measure $\star$ force of mortality $\star$ longevity

\end{abstract}

\maketitle

\pagestyle{myheadings} \markboth{\sc M.\ Krawiec --- Z.\ Palmowski} {\sc Multidimensional drift change detection}

\vspace{1.8cm}

\tableofcontents

\newpage

\section{Introduction}
Quickest detection problems, often called also \textit{disorder problems}, arise in various fields of applications of mathematics, such as finance, engineering or economics.
All of them address a question how to detect some changes in observed system in an \textit{optimal} way using statistical methods.
One of the main methods was based on the drift change detection using Bayesian approach; see e.g.
Shiryaev \cite{shiryaev1961problem, shiryaev1963optimum}, where Brownian motion with linear drift was considered and the drift has been changing according to an exponential distribution.
The original problem was reformulated in terms of a free-boundary problem and solved using optimal stopping methods. All details of this analysis are also given in surveys
\cite{shiryaev2006disorder, shiryaev2010quickest} (see also references therein).
Apart from Baysian method, the minimax approach have also been used.
This method is based on identifying the optimal detection time based on so-called cumulative sums (CUSUM) strategy; see e.g.
Page \cite{Page}, Beibel \cite{beibel1996note}, Shiryaev \cite{0036-0279-51-4-L15} or Moustakides \cite{moustakides2004optimality} in the Wiener case, or El Karoui et al. \cite{el2015minimax} in the Poisson case.
Many of these quickest detection problems and used methods are gathered in the book of Poor and Hadjiliadis \cite{poor2009quickest}.
In this paper we choose the first approach.

Our {\it first main goal} is to perform the analysis of the quickest drift change detection problem for multivariate processes, taking into account
the dependence between components. We also allow a general 0-modified continuous prior distribution of the change point.

Most of works on the detection problems in Bayesian setting has been devoted to the one-dimensional processes consisting of only continuous (Gaussian) part or only jumps; see e.g. Beibel \cite{beibel1994bayes}, Shiryaev \cite{shiryaev1963optimum} or \cite[Chap. 4]{shiryaev2007optimal} or Poor and Hadjiliadis \cite{poor2009quickest}.
Only some particular cases of jump models without diffusion component have been already analysed, e.g. by Gal’chuk and Rozovskii \cite{gal1971disorder}, Peskir and Shiryaev \cite{peskir2002solving} or Bayraktar et al. \cite{bayraktar2005standard} for the Poisson process, by Gapeev \cite{gapeev2005disorder} for the compound Poisson process with the exponential jumps or by Dayanik and Sezer \cite{dayanik2006compound}
for more general compound Poisson problem. Later, Krawiec et al. \cite{krawiec2017quickest} allowed observed process to have, apart from diffusion ingredient, jumps as well. This is very important in many applications appearing in actuarial science, finance, etc. Still, all of these results concern one-dimensional case only. This paper removes this limitation.

In addition, we assume that a drift change point $\th$ has a general 0-modified continuous prior distribution $G$. In most works it has been assumed that $\th$ can have only (0-modified) exponential distribution. Such assumption makes the free-boundary problem time-homogenous due to lack of memory property, which is not true in the general case.

The {\it methodology} used in this paper is based on
transferring the detection problem to a certain free-boundary problem.
More formally, in this paper
we consider the process $X=(X_t)_{t\geq 0}$ with
\begin{equation}
\label{eq:X_0_inf}X_t:=\left\{\begin{array}{ll} \Xi_t, & t<\th, \\ \Xi_{\th} + X^{0}_{t-\th}, & t \geq \th, \end{array} \right.
\end{equation}
where $\Xi=(\Xi_t)_{t\geq 0}$ and $X^{0}=(X^{0}_t)_{t\geq 0}$ are both independent jump-diffusion processes taking values in $\R^d$.
We assume that $\Xi$ and $X^{0}$ are related with each other via the exponential change of measure described e.g.
in Palmowski and Rolski \cite{palmowski2002technique}. This change of measure can be seen as a form of the drift change between $\Xi$ and $X^{0}$ with additional change in jump distribution. Later we will see the parameter $r$, which corresponds to the rate (direction) of disorder that can be observed after time $\th$.

Let $\th$ has an atom at zero with mass $x>0$.
We choose the classical optimality criterion based on both probability of a false alarm and a mean delay time. That is, in this paper, we are going to find an optimal detection rule $\tau^*\in \mathcal{T}$ for which the following infimum is attained
\begin{equation*}
V^*(x):=\inf_{\tau\in \mathcal{T}}\left\{\overline{\P}^{G}(\tau<\th) + c\overline{\E}^{G}_x[(\tau-\th)^+]\nonumber\right\},
\end{equation*}
where $\mathcal{T}$ is the family of stopping times and $c>0$ is fixed number.
Measure $\overline{\P}^{G}$ will be formally introduced later. Firstly, we transfer above detection problem into
the following optimal stopping problem
\begin{equation*}
V^*(x)=\inf_{\tau\in\mathcal{T}}\overline{\E}^{G}_x\left[1-\pi_{\tau} + c\int_0^{\tau}\pi_sds\right],
\end{equation*}
for the \textit{a posteriori} probability process $\pi=(\pi_t)_{t\geq 0}$ that also will be formally introduced later. The subscript $x$ associated with $\overline{\E}^{G}$ indicates the starting position of process $\pi$ equal to $x$.
In the next step, using the change of measure technique and stochastic calculus, we can identify the infinitesimal generator of the Markov process $\pi$. This part contains results of independent interest on properties of the posterior process $\pi$, that are related to the multidimensionality of the process $X$. In the classical case with exponential distribution $G$, $\pi$ is time-homogenous with generator $\mathcal{A}$. Finally, we formulate the free-boundary value problem, which in the time-homogenous case is as follows
\begin{equation*}
\begin{array}{cc}
\mathcal{A}f(x)=-cx,& 0\leq x < A^*, \\
f(x)=1-x,& A^*\leq x \leq 1,
\end{array}
\end{equation*}
with the boundary conditions
\begin{equation*}
f(A^{*-})=1-A^* \quad {\rm (continuous\; fit)},
\end{equation*}
\begin{equation*}
f'(A^{*-})=-1 \quad {\rm (smooth\; fit)},
\end{equation*}
\begin{equation*}
f'(0^+)=0 \quad {\rm (normal\; entrance)}
\end{equation*}
for some optimal level $A^*$ which allows to identify the threshold optimal alarm rule as
\begin{equation*}
\tau^*=\inf\{t\geq 0:\pi_t\geq A^*\}.
\end{equation*}
We first generalize above free-boundary problem and then solve it for two basic models: two-dimensional Brownian motion and two-dimensional Brownian motion with downward exponential jumps.

Our {\it second main goal} is to apply the solution of above multivariate detection problem to the analysis of correlated {\it change of drift in force of mortality} of men and women.
The life expectancies for men and women are widely recognized as dependent on each other. For example, married people live statistically longer than single ones. Since many insurance products are engineered for marriages or couples it is crucial to detect the change of mortality rate of marriages.
Indeed, the observed improvements of longevity produce challenges related
with the capital requirements that has to be constituted to face this long-term risk and with creating new ways to cross-hedge or to transfer part of the longevity risk to reinsurers or to financial markets.
To do this we need to perform accurate longevity projections
and hence to predict the change of the drift observed in prospective life tables (national or the specific ones used in insurance companies).
In this paper we analyze the Polish life tables for both men and women jointly.
We proceed as follows. We take logarithm of the force of mortality of men and women creating a two-dimensional process, modeled then by a jump-diffusion process. This process consists of observed two-dimensional drift that can be calibrated from the historical data
and a random zero-mean L\'evy-type perturbation.
Based on previous theoretical work we
construct a statistical and numerical procedure based on the generalized version of the Shiryaev-Roberts statistic introduced by Shiryaev \cite{shiryaev1961problem, shiryaev1963optimum} and Roberts \cite{roberts1966comparison}, see also
Polunchenko and Tartakovsky \cite{polunchenko2012state}, Shiryaev \cite{shiryaev2002quickest}, Pollak and Tartakovsky \cite{pollak2009optimality} and Moustakides et al. \cite{moustakides2009numerical}.
Precisely, we start from a continuous statistic derived from the solution of the optimal detection problem in continuous time.
Then we take discrete moments $0<t_1<t_2<\ldots<t_N$, construct an auxiliary statistic and raise the alarm when it exceeds certain threshold $A^*$ identified in the first part of the paper.

The set-up used in examples is, however, simplified compared to the theory presented in the previous sections. Applications focus mainly on multi-dimensionality of presented problem, to see how one can analyse mortality of men and women jointly. The distribution of change time $\theta$ is limited to classical (0-modified) exponential.

The paper is organized as follows. In Section \ref{sec:1} we describe basic setting of the problem, introduce main definitions and notation.
In this section we also formulate main theoretical results of the paper.
Section \ref{sec:2} is devoted to the construction of the \textit{Generalized Shiryaev-Roberts} statistic.
To apply it, we first need to find some density processes related to the processes $X$ prior and post the drift change.
This is done in Section \ref{sec:2} as well.
Particular examples are analyzed in Section \ref{sec:4}.
Next, in Section \ref{sec:mortality}, we give an application of the theoretical results to a real data from life tables.
We finish our paper with some technical proofs given in Section \ref{sec:6}.

\section{Model description and main results}\label{sec:1}
The main observable process is a regime-switching $d$-dimensional process $X=(X_t)_{t\geq 0}$. It changes its behavior at a random moment $\theta$ in
the following way:
\begin{equation}\label{Xrt_maindef}
X_t=\left\{\begin{array}{ll} X^{\infty}_t, & t<\th, \\ X^{\infty}_{\th} + X^{0}_{t-\th}, & t \geq \th, \end{array} \right.
\end{equation}
where $X^{\infty}$ and $X^{0}$ are two different independent L\'evy processes related with each other via exponential change of measure specified later.
The random time $\theta$ is independent
of the pre- and post-change random processes.

We assume the following model when the post-change drift equals $r$. The process that we observe after the change of drift
is a $d$-dimensional processes $X^{0} = (X^{0}_t)_{t\geq 0} = (X^{0}_{t,1},\ldots,X^{0}_{t,d})_{t\geq 0}$  defined as
\begin{equation}\label{defXor}
X_t^{0}:=\sigma W_t^{0}+rt+\sum_{k=1}^{N_t^{0}} J_k^{0}-\mu^{0} m^{0} t,
\end{equation}
where 
\begin{itemize}
\item $W^{0}=(W_t^{0})_{t\geq 0}=(W_{t,1}^{0},\ldots,W_{t,d}^{0})^T$ is a vector of standard independent Brownian motions,
\item $\sigma=(\sigma_{i,j})_{i,j=1,\ldots,d}$ is a matrix of real numbers, responsible for the correlation of the diffusion components of $X^{0}_{t,1},\ldots,X^{0}_{t,d}$,
we assume that $\sigma_{ii}>0$ for all $i=1,\ldots,d$,
\item $r=(r_1,\ldots,r_d)^T$ is a vector of an additional drift,
\item $N^{0}=(N_t^{0})_{t\geq 0}$ is a Poisson process with intensity $\mu^{0}$,
\item $(J_k^{0})_{k\geq 1}$ is a sequence of i.i.d. random vectors responsible for jump sizes; we denote each coordinate of  $J_k^{0}$ by $J_{k,i}^{0}$ for $i=1,\ldots,d$ and its distribution by $F^{0}_i$ with mean $m^{0}_i$; we also denote by $F^{0}$ a joint distribution of vector $J_k^{0}$ and by $m^{0}=(m^{0}_1,\ldots,m^{0}_d)^T$ its mean.
\end{itemize}
We assume that all components of $X_t^{0}$ are stochastically independent, i.e. $W_t^{0}$, $N_t^{0}$ and the sequence $(J_k^{0})_{k=1,2,\ldots}$ are independent.

Similarly, we assume that the process that we observe prior the drift change is a $d$-dimensional process $X^{\infty} = (X^{\infty}_t)_{t\geq 0} = (X^{\infty}_{t,1},\ldots,X^{\infty}_{t,d})_{t\geq 0}$ defined as
\begin{equation}\label{defXi}
X_t^{\infty}:=\sigma W_t^{\infty}+\sum_{k=1}^{N_t^{\infty}} J_k^{\infty}-\mu^{\infty} m^{\infty} t,
\end{equation}
where
\begin{itemize}
\item $W^{\infty}=(W_t^{\infty})_{t\geq 0}$ is a vector of standard independent Brownian motions,
\item matrix $\sigma$ is the same as for the process $X^{0}$,
\item $N^{\infty}=(N_t^{\infty})_{t\geq 0}$ is a Poisson process with intensity $\mu^{\infty}$,
\item $(J_k^{\infty})_{k\geq 1}$ is a sequence of i.i.d. random vectors, where each coordinate $J_{k,i}^{\infty}$ of $J_k^{\infty}$ has distribution $F^{\infty}_i$ with mean $m^{\infty}_i$; we also denote by $F^{\infty}$ a joint distribution of vector $J_k^{\infty}$ and by $m^{\infty}=(m^{\infty}_1,\ldots,m^{\infty}_d)^T$ its mean.
\end{itemize}
We denote $W_t=W^\infty_{t\wedge \theta}+W^0_{(t-\theta)^+}$ which is a Brownian motion as well.

To formally construct the model with a drift change described above, we follow the ideas of Zhitlukhin and Shiryaev \cite{zhitlukhin2013bayesian}. Precisely, we consider a filtered measurable space $(\Omega, \FF, \{\FF_t\}_{t\geq 0})$ with a right-continuous filtration $\{\FF_t\}_{t\geq 0}$, on which we define a \textit{stochastic system with disorder} as follows.
First, on a probability space $(\Omega, \FF, \{\FF_t\}_{t\geq 0})$ we introduce two probability measures $\P^{\infty}$ and $\P^{0}$ with their restrictions to $\mathcal{F}_t$ given by $\P^{\infty}_t:=\P^{\infty}|_{\FF_t}$ and $\P^{0}_t:=\P^{0}|_{\FF_t}$. We assume that for each $t\geq 0$ the restrictions $\P^{\infty}_t$ and $\P^{0}_t$ are equivalent.
The measure $\P^{\infty}$ corresponds to the case when there is no drift change in the system at all and $\P^{0}$ describes the measure under which there is a drift $r$ present from the beginning (i.e. from $t=0$).
In the following we assume that both measures correspond to laws of the processes $X^{\infty}$ and $X^{0}$ described above, respectively.
We also introduce a probability measure $\P$ that dominates $\P^{\infty}$ and $\P^{0}$
and such that the restriction $\P_t:=\P|_{\FF_t}$ is equivalent to $\P^{\infty}_t$ and $\P^{0}_t$ for each $t\geq 0$.
We define the Radon-Nikodym derivatives
\begin{equation}\label{Ldef1}
L_t^{0}:=\frac{\d\P_t^{0}}{\d\P_t}, \quad L_t^{\infty}:=\frac{\d\P_t^{\infty}}{\d\P_t}.
\end{equation}
Furthermore, for $s\in(0,\infty)$ we define
\begin{equation}\label{Ltsr}
L_t^{(s)}:=L_t^{\infty}\Ind{t<s}+\frac{L_{s^-}^{\infty}}{L_{s^-}^{0}}L_t^{0}\Ind{t\geq s}.
\end{equation}
Finally, for any fixed $s\in(0,\infty)$, taking a consistent family of probability measures $(\P_t^{(s)})_{t\geq 0}$ defined via
\begin{equation*}
\frac{\d \P_t^{(s)}}{\d\P_t}=L_t^{(s)}.
\end{equation*}
by the Kolmogorov's existence theorem we can define measures $\P^{(s)}$ such that $\P^{(s)}|_{\FF_t}=\P^{(s)}_t$. Note that for $t<s$ the following equality holds
\begin{equation*}
\P_t^{\infty}=\P_t^{(s)},
\end{equation*}
since disorder after time $t$ does not affect the behavior of the system before time $t$.

We consider Bayesian framework, that is, we assume that the moment of disorder is a random variable $\th$ with a given distribution function denoted by $G(s)$ on $(\R_+,\BB(\R_+))$. We assume that $G(s)$ is continuous for $s>0$ with right derivative $G'(0)>0$.
We define all quantities on an extended filtered probability space $(\overline{\Omega},\overline{\FF},\{\overline{\FF_t}\}_{t\geq 0},\overline{\P}^{G})$ such that
\begin{equation}\label{fullspace}
\overline{\Omega}:=\Omega\times\R_+,\quad \overline{\FF}:=\FF\otimes\BB(\R_+),\quad \overline{\FF}_t:=\FF_t\otimes
\{\emptyset,\R_+\}.
\end{equation}
Measure $\overline{\P}^{G}$ is defined for $A\in\FF$ and $B\in\BB(\R_+)$ as follows
\begin{equation*}
\overline{\P}^{G}(A\times B):=\int_B\P^{(s)}(A)\d G(s).
\end{equation*}
Observe that measure $\overline{\P}^{G}$ describes formally the process $X$ defined in \eqref{Xrt_maindef}.

In the problem of the quickest detection we are looking for an optimal stopping time $\tau^*$ that minimizes certain optimality criterion.
We consider a classical criterion, which incorporates both the probability of false alarm and the mean delay time.
Let $\TT$ denote the class of all stopping times with respect to the filtration $\{\overline{\FF_t}\}_{t\geq 0}$.
Our problem can be stated as follows:
\begin{Problem}\label{Prob:crit1}
For a given $c>0$ calculate the optimal value function
\eqn{\label{eq:crit1}V^*(x)=\inf_{\tau\in\TT}\{\Pgh(\tau<\th) + c\Egh[(\tau-\th)^+]\}}
and find the optimal stopping time $\tau^*$ for which above infimum is attained.
\end{Problem}
Above $\Egh$ means the expectation with respect to $\Pgh$.
The key role in solving this problem plays
\textit{a posterior probability process} $\pi=(\pi_t)_{t\geq 0}$ defined as
\begin{equation}\label{piExtended}
\pi_t:=\overline{\P}^{G}(\th\leq t|\overline{\FF}_t).
\end{equation}
We denote $x:=\pi_0=G(0)$ and add a subscript $x$ to $\Egh_x$ to emphasize it. Using this posterior probability, one can reformulate criterion (\ref{eq:crit1}) into the following, equivalent form:
\begin{Problem}\label{Prob:critExtended}
For a given $c>0$ find the optimal value function
\begin{equation*}\label{eq:opt_fun}
V^*(x)=\inf_{\tau\in\TT}\Egh_x\left[1-\pi_{\tau}+c\int_0^{\tau}\pi_s\d s\right]
\end{equation*}
and the optimal stopping time $\tau^*$ such that
\begin{equation*}\label{eq:opt_time}
V^*(x)=\Egh_x\left[1-\pi_{\tau^*}+c\int_0^{\tau^*}\pi_s\d s\right].
\end{equation*}
\end{Problem}
That is, formally, the following result holds true.
\begin{Lemma}\label{lem:crit}
The criterion given in Problem \ref{Prob:crit1} is equivalent to the criterion given in Problem \ref{Prob:critExtended}.
\end{Lemma}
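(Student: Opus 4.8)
The plan is to prove, for each fixed stopping time $\tau\in\TT$, the identity in expectation
\begin{equation*}
\Pgh(\tau<\th)+c\Egh_x[(\tau-\th)^+]=\Egh_x\left[1-\pi_\tau+c\int_0^\tau\pi_s\,\d s\right],
\end{equation*}
after which taking $\inf_{\tau\in\TT}$ on both sides yields the equality of the two value functions and shows that any minimizer of one criterion minimizes the other. I would establish this by treating the false-alarm term and the delay term separately, the common tool being the fact that the posterior process $\pi$ of \eqref{piExtended} is (a c\`adl\`ag version of) the \emph{optional projection} of the non-adapted process $(\Ind{\th\le t})_{t\ge0}$ onto the filtration $\{\overline{\FF}_t\}_{t\ge0}$.

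For the delay term I would first record the pathwise identity $(\tau-\th)^+=\int_0^\tau\Ind{\th\le s}\,\d s$ and then apply Tonelli's theorem (all integrands are nonnegative) to exchange expectation and time integral, obtaining
\begin{equation*}
\Egh_x[(\tau-\th)^+]=\int_0^\infty\Egh_x\!\left[\Ind{s<\tau}\,\Ind{\th\le s}\right]\d s.
\end{equation*}
Since $\tau$ is a stopping time, $\{s<\tau\}=\{\tau\le s\}^{c}\in\overline{\FF}_s$, so I condition on $\overline{\FF}_s$ inside the integrand, pull out the $\overline{\FF}_s$-measurable factor $\Ind{s<\tau}$, and use $\Egh_x[\Ind{\th\le s}\mid\overline{\FF}_s]=\pi_s$ from \eqref{piExtended}. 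Reversing Tonelli then gives $\Egh_x[(\tau-\th)^+]=\Egh_x[\int_0^\tau\pi_s\,\d s]$.

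For the false-alarm term I would write $\Pgh(\tau<\th)=1-\Egh_x[\Ind{\th\le\tau}]$ and aim to identify $\Egh_x[\Ind{\th\le\tau}]$ with $\Egh_x[\pi_\tau]$. This is the delicate step: whereas the delay term only required conditioning at deterministic times $s$, here I must evaluate the optional projection at the \emph{random} time $\tau$. The clean route is to invoke the defining property of the optional projection, namely that for every stopping time one has $\Egh_x[\Ind{\th\le\tau}\Ind{\tau<\infty}]=\Egh_x[\pi_\tau\Ind{\tau<\infty}]$; this presupposes that $\pi$ is taken in its c\`adl\`ag (optional) modification so that $\pi_\tau$ is well defined, and that the behaviour on $\{\tau=\infty\}$ is consistent (here $\pi_t\to1$ as $t\to\infty$ since $\th<\infty$ a.s., matching $\Ind{\th\le\infty}=1$). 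I expect this optional-projection/optional-sampling argument to be the main obstacle, since it is where all the measure-theoretic care is concentrated. An equivalent but more elementary alternative is to approximate $\tau$ from above by stopping times $\tau_n\downarrow\tau$ taking countably many values, for which $\Egh_x[\Ind{\th\le\tau_n}]=\Egh_x[\pi_{\tau_n}]$ follows by summing the deterministic-time identity over the atoms of $\tau_n$, and then to pass to the limit using right-continuity of $\pi$ together with dominated convergence.

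Combining the two identities yields the displayed equality for each $\tau\in\TT$. Since its right-hand side is precisely the functional of Problem \ref{Prob:critExtended} and its left-hand side is that of Problem \ref{Prob:crit1}, taking the infimum over $\tau\in\TT$ shows that $V^*(x)$ is the same in both formulations and that the two problems share the same set of optimal stopping times.
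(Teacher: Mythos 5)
Your proposal is correct and follows essentially the same route as the paper: Tonelli for the delay term via $(\tau-\th)^+=\int_0^\tau\Ind{\th\le s}\,\d s$, and the tower property plus the identification $\Egh_x[\Ind{\th\le\tau}\mid\overline{\FF}_\tau]=\pi_\tau$ for the false-alarm term. If anything, you are more careful than the paper at the one delicate point — justifying the evaluation of the optional projection at the random time $\tau$ (via the optional-projection property or discretization $\tau_n\downarrow\tau$) — which the paper's proof simply asserts.
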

Although the proof follows classical arguments, we added it in Section \ref{sec:6} for completeness.

Below we formulate the main theorem that connects Problem \ref{Prob:critExtended} to the particular free-boundary problem. It is based on the general optimal stopping theory in the similar way as Theorem 1 in Krawiec et al. \cite{krawiec2017quickest}, which it extends. However, for the general (continuous for $s>0$ with right derivative $G'(0)>0$) distribution $G(s)$ of the moment $\theta$, the optimal stopping problem and its solution are time-dependent. The problem reduces to time-independent case for the (0-modified) exponential distribution $G$. We will prove it in Section \ref{sec:6}.

\begin{Thm}\label{mainresult}
Let $\left(\frac{\partial}{\partial t}+\mathcal{A}\right)$ be a Dynkin generator of the Markov process $(t,\pi_t)_{t\geq 0}$.
Then the optimal value function $V^*(x)$ from the Problem \ref{Prob:critExtended} equals $f_0(x)$, where $f_t(x)$
solves the free-boundary problem
\begin{equation}\label{system}
\begin{array}{cc}
(\frac{\partial}{\partial t}+\mathcal{A}) f_t(x)=-cx,& 0\leq x < A^*(t), \\
f_t(x)=1-x,& A^*(t)\leq x \leq 1,
\end{array}
\end{equation}
with the boundary conditions
\begin{equation}\label{contfit}
f_t(A^{*}(t)-)=1-A^*(t) \quad {\rm (continuous\; fit)},
\end{equation}
\begin{equation}\label{smoothfit}
f^\prime_t(A^{*}(t)-)=-1 \quad {\rm (smooth\; fit)}.
\end{equation}
Furthermore, the optimal stopping time for the Problem \ref{Prob:critExtended} is given by
\begin{equation}\label{ost}
\tau^*=\inf\{t\geq 0:\pi_t\geq A^*(t)\}.
\end{equation}
If $G$ is the (0-modified) exponential distribution, then $V^*(x)$ solves above free-boundary problem for the unique point
$A^*~\in~(0,1]$ not depending on time with the optimal stopping time given by
\begin{equation}\label{ost_exp}
\tau^*=\inf\{t\geq 0:\pi_t\geq A^*\}.
\end{equation}
Further, in this case $f_t(x)=f_0(x)=f(x)$ and additionally the following condition holds
\begin{equation}\label{norment}
f^\prime(0+)=0 \quad {\rm (normal\; entrance)}.
\end{equation}
\end{Thm}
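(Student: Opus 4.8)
The plan is to read \eqref{eq:opt_fun} as an optimal stopping problem for the time--space Markov process $(t,\pi_t)_{t\geq0}$ with Dynkin generator $\frac{\partial}{\partial t}+\AA$, and to apply the general verification theory for optimal stopping (in the spirit of the proof of Theorem~1 in \cite{krawiec2017quickest}, which the present statement extends). Writing $g(x)=1-x$ for the terminal cost and $cx\geq0$ for the running cost rate, I would define $f_t(x)$ to be the value of the problem started from state $x$ at time $t$, so that $V^*(x)=f_0(x)$. General theory characterises $f_t$ as the largest function dominated by $g$ for which
\[
f_t(\pi_t)+c\int_0^t\pi_s\,\d s
\]
is a submartingale under $\Egh_x$; the required integrability is immediate since $0\le\pi_s\le1$ and $g$ is bounded. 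This gives simultaneously the variational inequality and the fact that the first entry of $(t,\pi_t)$ into the stopping set $D=\{(t,x):f_t(x)=g(x)\}$ is optimal, which is exactly \eqref{ost}.

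Next I would pin down the geometry of $D$. Since both cost functionals are affine in the posterior and $f_t(1)=0=g(1)$ (if $\pi=1$ the disorder has surely occurred and one stops at no cost), I expect $x\mapsto f_t(x)$ to be concave for each fixed $t$; concavity together with $f_t\le g$ and the contact at $x=1$ forces the contact set to be an up-interval, i.e.\ $D=\{(t,x):x\geq A^*(t)\}$ and the continuation set to be $[0,A^*(t))$. On the continuation set the submartingale above is in fact a martingale, so its drift vanishes, giving $(\frac{\partial}{\partial t}+\AA)f_t(x)=-cx$, while on $D$ one has $f_t(x)=1-x$; this is precisely \eqref{system}. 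Continuity of $f_t$ across the free boundary yields the continuous fit \eqref{contfit}. The smooth fit \eqref{smoothfit} I would derive from the non-degeneracy of the Gaussian component ($\sigma_{ii}>0$): the diffusion renders every boundary point regular for the interior, and the standard comparison of the stopping times started just below and exactly at $A^*(t)$ then forces $C^1$-pasting of $f_t$ at $A^*(t)$.

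For the last part, let $G$ be the (0-modified) exponential distribution. Its lack-of-memory property makes the posterior $\pi$ a time-homogeneous Markov process and removes the explicit time dependence from its generator; hence $\frac{\partial}{\partial t}f_t\equiv0$, so $f_t(x)=f_0(x)=:f(x)$ and the free boundary is constant, $A^*(t)\equiv A^*$, reducing \eqref{ost} to \eqref{ost_exp}. Uniqueness of $A^*\in(0,1]$ follows because the equation $\AA f=-cx$ together with the pasting data $f(A^{*}-)=1-A^*$ and $f'(A^{*}-)=-1$ admits a single admissible threshold. Finally, the normal entrance condition \eqref{norment} I would obtain by letting $x\to0+$ in $\AA f=-cx$: at $x=0$ the diffusion coefficient of $\pi$ degenerates while its drift is strictly positive (the posterior is instantaneously pushed into $(0,1)$), so $0$ is an entrance boundary, the leading surviving term of $\AA f$ near $0$ is proportional to $f'(0+)$, and the bounded right-hand side $-cx\to0$ then forces $f'(0+)=0$.

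The main obstacle will be the rigorous justification of the smooth fit \eqref{smoothfit}: although the non-degenerate diffusion component should guarantee regularity of the boundary and hence $C^1$-pasting, the presence of jumps makes $\AA$ non-local, so one must verify that the integral (jump) part of $\AA f$ stays continuous across $A^*(t)$ and does not spoil the matching of derivatives. A secondary but genuine difficulty is establishing the concavity of $x\mapsto f_t(x)$ — and thereby the single-threshold form of $D$ — directly from the dynamics of $\pi$ rather than assuming it, since $\pi$ depends nonlinearly on its starting value; this monotonicity/convexity analysis is the technical heart of the argument.
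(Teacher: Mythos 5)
Your proposal follows essentially the same route as the paper: both treat \eqref{eq:opt_fun} as optimal stopping for the time--space Markov process $(t,\pi_t)$, use concavity of $x\mapsto f_t(x)$ to force the one-threshold structure $C=[0,A^*(t))$, identify the martingale property on $C$ with the equation $(\frac{\partial}{\partial t}+\mathcal{A})f_t=-cx$, obtain smooth fit from regularity of the boundary, and pass to the time-homogeneous case via the lack-of-memory property of the exponential prior (the paper outsources the concavity, smooth-fit and normal-entrance steps to Lemmas 3, 6 and 7 of Krawiec et al.\ and to Peskir--Shiryaev, exactly the points you flag as the technical heart). The two difficulties you single out are precisely the ones the paper resolves by citation rather than by new argument, so your outline is a faithful blind reconstruction of the paper's proof.
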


See also Peskir and Shiryaev \cite[Chap. VI. 22]{peskir2006optimal},
Krylov \cite[p. 41]{Krylov}, Strulovici and Szydlowski \cite[Thm. 4]{ss} and \cite{Tiziano} for details.

It is known that the Dynkin generator is an extension of an infinitesimal generator in the sense of their domains.
Following \cite[Chap. III]{peskir2006optimal} and discussion done on page 131 of \cite{peskir2006optimal}
(see also the proof of \cite[Prop. 2.6]{Jacka})
we can conclude that the optimal value function $V^*(t,x)$ satisfies \eqref{system}
where $\left(\frac{\partial}{\partial t} + \mathcal{A}\right)$ is an infinitesimal generator
as long as there exists unique solution
of \eqref{system} lying in the domain of infinitesimal generator.

Now to formulate properly above free-boundary problem, we have to identify the infinitesimal generator $\left(\frac{\partial}{\partial t} + \mathcal{A}\right)$ and its domain. They are given in next theorem.
We use notation $f_t(x) = f(t,x)$ for functions $f: ([0,\infty), [0,1]) \to \mathbb{R}$.

\begin{Thm}\label{generator}
The infinitesimal generator of the Markov process $(t,\pi_t)_{t\geq 0}$ is given by $\frac{\partial}{\partial t}f_t(x) +\mathcal{A} f_t(x)$ for
\begin{multline}\label{genFinal}
\mathcal{A}f_t(x) :=f_t'(x)\bigg( -(1-x)(\log(1-G(t)))' + x(1-x)(\mu^{\infty}-\mu^{0}) \bigg)\\
+ \frac{1}{2}f_t''(x)x^2(1-x)^2\sum_{i=1}^d\sum_{j=1}^d z_{r,i}z_{r,j}(\sigma\sigma^T)_{ij} \\
+ \int_{\R^d}\left[f_t\left(\frac{x\exp\left\{\sum_{i=1}^dz_{r,i}u\right\}}{x\left(\exp\left\{\sum_{i=1}^dz_{r,i}u\right\}-1\right)+1}\right)-f_t(x)\right] \\
\cdot\left[(1-x)\mu^{\infty}\d F^{\infty}(u)+x\mu^{0}\d F^{0}(u)\right]
\end{multline}
and for functions $f_t\in \mathcal{C}^2$. If $G(s)$ is the (0-modified) exponential distribution, then $(\pi_t)_{t\geq 0}$ is a Markov process with generator $\mathcal{A}$ given as above with term $-(1-x)(\log(1-G(t)))'$ substituted by $G'(0)$ for functions $f_t(x)=f(x)$ not depending on $t\geq 0$.
\end{Thm}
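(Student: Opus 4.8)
The plan is to make the posterior explicit as a function of a likelihood-ratio statistic, to determine the $\Pgh$-dynamics of that statistic, and then to read the three parts of $\mathcal{A}$ off the jump-diffusion It\^o formula. First I would compute $\pi_t$ from \eqref{piExtended}. By the construction of $\Pgh$ through the family $(\P^{(s)})$, Bayes' rule gives
\[
\pi_t=\frac{\int_{[0,t]}L_t^{(s)}\,\d G(s)}{\int_{[0,\infty)}L_t^{(s)}\,\d G(s)}.
\]
Inserting \eqref{Ltsr} and splitting the denominator at $s=t$, the contribution of $\{s>t\}$ collapses to $L_t^{\infty}(1-G(t))$ and that of $\{s\le t\}$ factors through $L_t^{0}$. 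Writing $\Lambda_t:=L_t^{0}/L_t^{\infty}$ for the likelihood ratio and $\Phi_t:=\pi_t/(1-\pi_t)$ for the associated odds (a generalized Shiryaev--Roberts statistic), this yields
\[
\Phi_t=\frac{\Lambda_t}{1-G(t)}\int_{[0,t]}\frac{\d G(s)}{\Lambda_{s-}},\qquad \pi_t=\frac{\Phi_t}{1+\Phi_t},
\]
with $\Phi_0=x/(1-x)$ coming from the atom $G(0)=x$.

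Second, I would find the dynamics of $\Lambda_t$. Since $X^{0}$ arises from $X^{\infty}$ by the exponential change of measure of Palmowski and Rolski \cite{palmowski2002technique}, $\Lambda_t$ is the corresponding Esscher density with tilting vector $z_r$, the latter being fixed by the requirement that the Gaussian drift be shifted by $r$ (so that $\sigma\sigma^{T}z_r=r$) and that the jump law change from $\mu^{\infty}F^{\infty}$ to $\mu^{0}F^{0}$ (so that $\mu^{\infty}\exp\{\sum_{i=1}^{d}z_{r,i}u_i\}\,\d F^{\infty}(u)=\mu^{0}\,\d F^{0}(u)$). Hence $\Lambda_t$ is a Dol\'eans--Dade exponential: its continuous martingale part has coefficient $z_r^{T}\sigma\,\d W_t$ and its jumps are multiplicative by $\exp\{\sum_{i=1}^{d}z_{r,i}u_i\}$ at the jump times of $X$, while the intensity mismatch contributes $\mu^{\infty}-\mu^{0}$ to the drift. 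The quadratic form $\sum_{i,j}z_{r,i}z_{r,j}(\sigma\sigma^{T})_{ij}=z_r^{T}\sigma\sigma^{T}z_r$ already appears here as the diffusion intensity.

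Third, I would assemble $\d\Phi_t$ by the It\^o product rule applied to the representation above: the deterministic factor $1/(1-G(t))$ supplies the hazard-rate drift $G'(t)/(1-G(t))=-(\log(1-G(t)))'$, the integral supplies a further finite-variation term, and $\Lambda_t$ supplies the martingale and jump parts. Passing to $\pi_t=\Phi_t/(1+\Phi_t)$ by the jump-diffusion It\^o formula and collecting terms then produces exactly the three pieces of \eqref{genFinal}: the chain rule through $\pi=\Phi/(1+\Phi)$ turns the continuous part into the second-order coefficient $\tfrac{1}{2}x^2(1-x)^2z_r^{T}\sigma\sigma^{T}z_r$; the finite-variation part becomes the first-order coefficient $-(1-x)(\log(1-G(t)))'+x(1-x)(\mu^{\infty}-\mu^{0})$; and each jump of $X$ of size $u$ sends $\pi_{t-}=x$ to $x\exp\{\sum_{i=1}^{d}z_{r,i}u_i\}/\big(x(\exp\{\sum_{i=1}^{d}z_{r,i}u_i\}-1)+1\big)$, giving the integral term. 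The delicate point, and the main obstacle, is the compensator: under $\overline{\FF}_t$ the observer cannot tell which regime is active, so the effective $\Pgh$-intensity of the jumps of $X$ is the posterior mixture $(1-x)\mu^{\infty}\,\d F^{\infty}(u)+x\mu^{0}\,\d F^{0}(u)$ appearing in \eqref{genFinal}. I would establish this by computing the $\overline{\FF}_t$-conditional jump intensity directly from the definition of $\Pgh$ (equivalently, via the innovation representation of filtering), rather than from $\P^{0}$ or $\P^{\infty}$ separately.

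Finally, reading the coefficients off the resulting stochastic differential gives \eqref{genFinal}, valid on $\mathcal{C}^2$ functions since the chain rules invoked require two $x$-derivatives of $f_t$. For the $0$-modified exponential prior the hazard rate is constant, so the time-dependent coefficient reduces to the constant $-(1-x)(\log(1-G(t)))'=G'(0)$; the explicit $t$-dependence then disappears, $(t,\pi_t)$ collapses to the time-homogeneous Markov process $\pi_t$, and the substitution asserted in the theorem follows.
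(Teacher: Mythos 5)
Your route is essentially the paper's: both start from the generalized Bayes representation of $\pi_t$ in terms of $L_t^{(s)}$, reduce to the likelihood ratio $L_t=L_t^{0}/L_t^{\infty}$ and the statistic $\psi_t=\int_0^t (L_t/L_{s-})\,\d G(s)$ (your odds $\Phi_t$ is just $\psi_t/(1-G(t))$), and then grind through the jump-diffusion It\^o formula and a projection onto $\overline{\FF}_t$ to read off the generator. You are in fact more explicit than the paper about the one genuinely delicate step, namely that the $\overline{\FF}_t$-compensator of the jumps is the posterior mixture $(1-x)\mu^{\infty}\d F^{\infty}+x\mu^{0}\d F^{0}$ and that the indicator $\Ind{\theta\le t}$ in the drift must be replaced by $\pi_t$; the paper compresses this into ``using It\^o's formula one more time.''

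One concrete slip: your parenthetical fixing of the tilting vector by $\sigma\sigma^{T}z_r=r$ is not the correct condition in the presence of jumps. Since the compensated-jump drifts differ between the two regimes, the Girsanov shift of the Gaussian part must absorb $r-\mu^{0}m^{0}+\mu^{\infty}m^{\infty}$, i.e.\ $z_r$ must solve the full system \eqref{zsystem}. This matters beyond bookkeeping: in the It\^o expansion of $\d L_t$ the term $\sum_i z_{r,i}\left(r_i+\mu^{\infty}m_i^{\infty}-\mu^{0}m_i^{0}\right)\Ind{\theta\le t}$ must equal $\sum_{i,j}z_{r,i}z_{r,j}(\sigma\sigma^{T})_{ij}\Ind{\theta\le t}$ so that, after projecting $\Ind{\theta\le t}$ to $\pi_t$, it cancels against the It\^o quadratic-variation correction and leaves only the clean first-order coefficient $-(1-x)(\log(1-G(t)))'+x(1-x)(\mu^{\infty}-\mu^{0})$ of \eqref{genFinal}. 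With $\sigma\sigma^{T}z_r=r$ that cancellation fails whenever $\mu^{0}m^{0}\neq\mu^{\infty}m^{\infty}$. Replace the parenthetical by the system \eqref{zsystem} and the argument goes through as in the paper.
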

We will prove this theorem later in Section \ref{sec:6}.

Assume that we can find unique solution of \eqref{system}-\eqref{smoothfit} in the class $\mathcal{C}^2$ with $\mathcal{A}$
given in \eqref{genFinal} then by above considerations it follows that
this solution equals the value function $V^*(t,x)$.
Therefore in the final step we
focus on the simple time-homogeneous case of exponential time change case, then
we solve uniquely \eqref{system}-\eqref{smoothfit} for some specific choice of model parameters, and finally,
find the optimal threshold $A^*$ and hence the optimal alarm time.
This allows us to construct a Generalized Shiryaev-Roberts statistic in this general set-up.
Later we apply it to detect the changes of drift in joint (correlated) mortality of men and women based on life tables.

\section{Generalized Shiryaev-Roberts statistic}\label{sec:2}
Following Zhitlukhin and Shiryaev \cite{zhitlukhin2013bayesian} and Shiryaev \cite[II.7]{shiryaev1996probability} and using the generalized Bayes theorem, the following equality for process $\pi$ defined in \eqref{piExtended} is satisfied
\begin{equation}\label{pirep2}
\pi_t =  \frac{\int_0^tL_t^{(s)} \d G(s)}{\int_0^{\infty}L_t^{(s)} \d G(s)}.
\end{equation}
We will give another representation of the process $\pi$ in terms of the process $L=(L_t)_{t\geq 0}$ defined by
\begin{equation}\label{defLtr}
L_t:=\frac{L_t^{0}}{L_t^{\infty}}=\frac{\d\P_t^{0}}{\d\P_t^{\infty}}.
\end{equation}
To find above Radon-Nikodym derivative such that process $X$ defined in (\ref{Xrt_maindef}) indeed admits representation (\ref{defXor})
under the measure $\P^{0}$ and (\ref{defXi}) under $\P^{\infty}$, we assume that for given $r=(r_1,\ldots,r_d)\in\R^d$ the following relation holds
\begin{equation}\label{jumprel}
\forall_{x\in\R^d}\quad \mu^{0} F^{0}(\d y)=\frac{h_r(x+y)}{h_r(x)}\mu^{\infty} F^{\infty}(\d y),
\end{equation}
where for $x=(x_1,\ldots,x_d)$ the function $h_r(x): \R^d\to \R$ is given by
\begin{equation}\label{funh}
h_r(x)=\exp\left\{\sum_{j=1}^dz_{r,j}x_j\right\}.
\end{equation}
Above the coefficients $z_{r,1}\ldots z_{r,d}$ solve the following system of equations:
\begin{equation}\label{zsystem}
\left\{\begin{array}{lcl}
r_1-\mu^{0}m^{0}_1+\mu^{\infty}m^{\infty}_1&=&\sum_{j=1}^dz_{r,j}(\sigma\sigma^T)_{1,j}, \\
\vdots&\vdots&\vdots \\
r_d-\mu^{0}m^{0}_d+\mu^{\infty}m^{\infty}_d&=&\sum_{j=1}^dz_{r,j}(\sigma\sigma^T)_{d,j}.
\end{array}\right.
\end{equation}
\begin{Thm}\label{thm:Ltr}
Assume that (\ref{jumprel}) holds for given $r\in\R^d$ and that the Radon-Nikodym derivative $L=(L_t)_{t\geq 0}$ defined in (\ref{defLtr}) is given by
\begin{equation}\label{mainLtr}
L_t=\exp\left\{\sum_{i=1}^dz_{r,i}(X_{t,i}-X_{0,i})-K_rt\right\},
\end{equation}
where
\begin{equation}\label{Kr}
K_r=\frac{1}{2}\sum_{i=1}^d\sum_{j=1}^dz_{r,i}z_{r,j}(\sigma\sigma^T)_{i,j}-\sum_{i=1}^dz_{r,i}\mu^{\infty}m^{\infty}_i+\mu^{0}-\mu^{\infty}.
\end{equation}
Then the process $X$ defined in (\ref{Xrt_maindef}) admits representation (\ref{defXor}) under the measure $\P^{0}$ and (\ref{defXi}) under $\P^{\infty}$.
\end{Thm}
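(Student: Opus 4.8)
The plan is to read $L_t$ as an exponential (Esscher) change-of-measure density in the sense of Palmowski and Rolski \cite{palmowski2002technique}, and to verify by a direct Laplace-exponent computation that tilting $\P^\infty$ by $L_t$ turns the L\'evy process $X$ into one with exactly the characteristics of \eqref{defXor}. Throughout I treat $X$ under $\P^\infty$ as the L\'evy process $X^\infty$ of \eqref{defXi}, which is the defining property of $\P^\infty$; the representation \eqref{defXi} is therefore the starting point, and all the work lies in establishing \eqref{defXor} under $\P^0$, where $\P^0$ is the measure with $\d\P^0_t/\d\P^\infty_t=L_t$ from \eqref{defLtr}. Here $z_r=(z_{r,1},\ldots,z_{r,d})^T$ denotes the solution of the linear system \eqref{zsystem}.

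First I would compute the $\P^\infty$-cumulant of $X$. Using the independence of the Gaussian, compound-Poisson and drift components in \eqref{defXi}, for $\lambda\in\R^d$ one gets $\E^\infty[\exp\{\lambda^T(X_t-X_0)\}]=e^{t\psi(\lambda)}$ with
\[
\psi(\lambda)=\tfrac12\sum_{i,j}\lambda_i\lambda_j(\sigma\sigma^T)_{ij}-\mu^\infty\sum_i\lambda_i m^\infty_i+\mu^\infty\Big(\int_{\R^d}e^{\lambda^T u}\,\d F^\infty(u)-1\Big).
\]
Comparing this with \eqref{Kr}, the only discrepancy is in the jump term, and it disappears once one integrates the jump relation \eqref{jumprel} over $y\in\R^d$: since $h_r(x+y)/h_r(x)=e^{\sum_j z_{r,j}y_j}$ is independent of $x$ and $F^0$ is a probability measure, \eqref{jumprel} gives $\mu^0=\mu^\infty\int_{\R^d}e^{\sum_j z_{r,j}u_j}\,\d F^\infty(u)$, whence $\psi(z_r)=K_r$. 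Consequently $L_t=\exp\{\sum_i z_{r,i}(X_{t,i}-X_{0,i})-\psi(z_r)t\}$ is the standard mean-one exponential martingale of a L\'evy process under $\P^\infty$, so $\P^0$ is a well-defined probability measure, equivalent to $\P^\infty$ on each $\FF_t$, consistent with \eqref{defLtr}.

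Next I would identify the $\P^0$-law of $X$ by the same Laplace transform, now computed under the tilted measure: for $\lambda\in\R^d$,
\[
\E^0\!\big[e^{\lambda^T(X_t-X_0)}\big]=e^{-\psi(z_r)t}\,\E^\infty\!\big[e^{(\lambda+z_r)^T(X_t-X_0)}\big]=\exp\!\big\{t\,(\psi(\lambda+z_r)-\psi(z_r))\big\}.
\]
Expanding the exponent, the quadratic part of $\psi$ contributes $\tfrac12\lambda^T\sigma\sigma^T\lambda+\lambda^T\sigma\sigma^T z_r$, while the jump part becomes, again by \eqref{jumprel}, $\mu^0\big(\int e^{\lambda^T u}\,\d F^0(u)-1\big)$; hence
\[
\psi(\lambda+z_r)-\psi(z_r)=\tfrac12\lambda^T\sigma\sigma^T\lambda+\lambda^T\big(\sigma\sigma^T z_r-\mu^\infty m^\infty\big)+\mu^0\Big(\int e^{\lambda^T u}\,\d F^0(u)-1\Big).
\]
At this point the linear system \eqref{zsystem} enters decisively: it states exactly that $(\sigma\sigma^T z_r)_i=r_i-\mu^0 m^0_i+\mu^\infty m^\infty_i$, so the linear coefficient collapses to $\lambda^T(r-\mu^0 m^0)$ and the exponent becomes precisely the cumulant of $X^0$ in \eqref{defXor}. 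Since under an Esscher tilt the increments of $X$ remain independent and stationary, and the one-dimensional Laplace exponent determines the law of a L\'evy process, this shows that $X$ is distributed under $\P^0$ as in \eqref{defXor}, which together with the defining representation \eqref{defXi} under $\P^\infty$ proves the claim.

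The step I expect to be the main obstacle is the rigorous passage from the formal moment-generating-function identity to an identity of laws. This requires the exponential integrability $\int_{\R^d}e^{\lambda^T u}\,\d F^\infty(u)<\infty$ in a neighborhood of $z_r$ (which is implicit in \eqref{jumprel}, since its right-hand side must integrate to the finite mass $\mu^0$), together with the fact that the Palmowski--Rolski change of measure preserves the L\'evy property and that matching Laplace exponents on a neighborhood of the origin---equivalently, matching characteristic functions---identifies the full distribution. The martingale property of $L_t$, needed so that $\P^0$ is a genuine probability rather than a sub-probability tilt, is the other technical point, but it follows once $K_r=\psi(z_r)$ has been established.
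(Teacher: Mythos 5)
Your proposal is correct, but it reaches the conclusion by a genuinely different route than the paper. The paper stays entirely at the level of infinitesimal generators: it first verifies that $L_t$ in \eqref{mainLtr} has the Palmowski--Rolski form $\frac{h_r(X_t)}{h_r(X_0)}\exp\{-\int_0^t\frac{(\mathcal{A}^{\infty}h_r)(X_s)}{h_r(X_s)}\,\d s\}$ by computing $(\mathcal{A}^{\infty}h_r)/h_r=K_r$, and then invokes their Theorem 4.2 to get the tilted generator $\mathcal{A}^{0}f=h_r^{-1}[\mathcal{A}^{\infty}(fh_r)-f\mathcal{A}^{\infty}h_r]$, which it matches term by term against the generator \eqref{genr} of $X^{0}$ using \eqref{zsystem}. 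You instead exploit the L\'evy structure directly and work with Laplace exponents: the identity $\psi(z_r)=K_r$ (your integrated form of \eqref{jumprel}, $\mu^{0}=\mu^{\infty}\int e^{z_r^Tu}\,\d F^{\infty}(u)$, is exactly the paper's computation of $\int\frac{h(x+y)-h(x)}{h(x)}\mu^{\infty}F^{\infty}(\d y)=\mu^{0}-\mu^{\infty}$ in disguise) establishes the martingale normalization, and the cumulant shift $\psi(\lambda+z_r)-\psi(z_r)$ combined with \eqref{zsystem} identifies the post-change characteristics. The two arguments are structurally parallel --- both hinge on the same two inputs, \eqref{jumprel} for the jump part and \eqref{zsystem} for the drift --- but yours reduces everything to an algebraic identity between cumulants and only needs that an Esscher tilt preserves the L\'evy property, whereas the paper's generator argument is the one that generalizes beyond L\'evy processes (which is why the authors phrase it via \cite{palmowski2002technique}). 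Your closing remarks on exponential integrability near $z_r$ and on the true-martingale property of $L$ correctly flag the only technical points; both are settled by the finiteness of $\mu^{0}$ forced by \eqref{jumprel}, and the paper glosses over them just as briefly.
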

The proof will be given in Section \ref{sec:6}.

Having the density process $L$ defined in \eqref{defLtr} identified in above theorem, we introduce an auxiliary process
\begin{equation}\label{defpsi}
\psi_t:=\int_0^t\frac{L_t}{L_{s^-}}\d G(s).
\end{equation}
Then by (\ref{Ltsr}), (\ref{pirep2}) and (\ref{defLtr}) the following representation of $\pi_t$ holds true
\begin{equation}\label{pirep3}
\pi_t=\frac{\psi_t}{\left[\psi_t+\int_t^{\infty}\frac{L_t^{(s)}}{L_t^{\infty}}\d G(s)\right]}=\frac{\psi_t}{\psi_t+1-G(t)},
\end{equation}
where the last equality follows from the definition of $L_t^{(s)}$ in (\ref{Ltsr}) for $t<s$. By the It\^o's formula applied to (\ref{defpsi}) we obtain that
$\psi^{0}_t$ solves the following SDE
\begin{equation}\label{dpsi}
\d \psi_t=\d G(t)+\frac{\psi_{t^-}}{L_{t}}\d L_t.
\end{equation}

The construction of the classical Shiryaev-Roberts statistic (SR) is in detail described and analyzed e.g.
by Shiryaev \cite{shiryaev2002quickest}, Pollak and Tartakovsky \cite{pollak2009optimality} and Moustakides et al. \cite{moustakides2009numerical}.
In this paper we consider Generalized Shiryaev-Roberts statistic (GSR).
We start the whole construction from
taking the discrete-time data $X_{t_i}\in \mathbb{R}^d$
observed in moments $0=t_0<t_1<\ldots<t_n$, where $n$ is a fixed integer. We assume that $t_i-t_{i-1}=1$ for $i=1,\ldots n$.
Let $x_k:=X_{t_k}-X_{t_{k-1}}$ for $k=1,\ldots,n$. Since $X$ is a $d$-dimensional process, $x_k$ is a
$d$-dimensional vector $x_k=(x_{k,1},\ldots,x_{k,d})$.

Considering a discrete analogue of (\ref{defpsi}) we define the following statistic
\begin{equation*}
\widetilde{\psi}_n:= L_n G(0) + \sum_{j=0}^{n-1}\frac{L_n}{L_j}G'(j) = L_n G(0) + \sum_{j=0}^{n-1}\prod_{k=j+1}^n\exp\left\{\sum_{i=1}^dz_{r,i}x_{k,i}-K_r\right\}G'(j),
\end{equation*}
where from equation (\ref{mainLtr}) we take
\begin{equation*}
L_n:=\exp\left\{\sum_{i=1}^dz_{r,i}\sum_{k=1}^nx_{k,i}-K_rn\right\}=\prod_{k=1}^n\exp\left\{\sum_{i=1}^dz_{r,i}x_{k,i}-K_r\right\}
\end{equation*}
for $n>0$ and $L_0=1$.
Above $G(0)=x$ corresponds to an atom at $0$.

For convenience it can be also calculated recursively as follows:
\begin{equation*}
\widetilde{\psi}_{n+1}=(\widetilde{\psi}_n+G'(n))\cdot\exp\left\{\sum_{i=1}^dz_{r,i}x_{n+1,i}-K_r\right\},\quad \widetilde{\psi}_0=x.
\end{equation*}
Recall from Theorem \ref{mainresult} that the optimal stopping time is given by
\begin{equation*}
\tau^*=\inf\{t\geq 0:\pi_t\geq A^*(t)\}
\end{equation*}
for some optimal level $A^*$.
Therefore from identity (\ref{pirep3}) we can introduce the following
Generalized Shiryaev-Roberts statistic
\begin{equation*}
\widetilde{\pi}_n=\frac{\widetilde{\psi}_n}{\widetilde{\psi}_n+1-G(n)}
\end{equation*}
and raise the alarm of the drift change at the optimal time of the form
\begin{equation*}
\widetilde{\tau}^*:=\inf\{n\geq 0:\widetilde{\pi}_n\geq A^*(n)\}.
\end{equation*}

We emphasize that the GSR statistic is more appropriate in longevity modeling analyzed in this article than the standard one (i.e. SR).
Indeed, the classical statistic is a particular case when $\theta$ has an exponential distribution with parameter $\lambda$ tending to $0$. The latter case corresponds to passing with mean value of the change point $\th$ to $\infty$ and hence  it becomes \textit{conditionally uniform}, see e.g. Shiryaev \cite{shiryaev2002quickest}. Still, in longevity modeling it is more likely that life tables will need to be revised more often and therefore keeping dependence on $\lambda>0$ in our statistic seems to be much more appropriate. For the similar reasons we also prefer to fix average moment of drift change $\theta$ instead of fixing the expected moment of the
revision time $\tau$.

To apply above strategy we will focus on the exponential time of drift change. In this case
we have to identify the optimal alarm level $A^*$ in the first step and hence we have to
solve uniquely the free-boundary value problem (\ref{system}) -- (\ref{smoothfit}). We analyze two particular examples in the next section.

\section{Examples}\label{sec:4}
\subsection{Two-dimensional Brownian motion}\label{Ex1}
Consider the process $X$ without jumps (i.e. with jump intensities $\mu^{\infty}=\mu^{0}=0$). In terms of processes $X^{0}$ and $X^{\infty}$ given in (\ref{defXor}) and ({\ref{defXi}) it means that
\begin{equation}\label{ex1model}
X_t^{0}=\sigma W_t^{0}+rt \quad\text{and}\quad X_t^{\infty}=\sigma W_t^{\infty}.
\end{equation}
Assume that
\begin{equation*}
\sigma=\left(\begin{array}{cc}\sigma_1 & 0 \\ \sigma_2\rho & \sigma_2\sqrt{1-\rho^2}\end{array}\right).
\end{equation*}
Then the first coordinate $X^{0}_{t,1}$ is a Brownian motion with drift and with variance $\sigma^2_1$ and the second coordinate $X^{0}_{t,2}$ is also a Brownian motion with drift and with variance $\sigma^2_2$. The correlation of the Brownian motions on both coordinates is equal to $\rho$. Process $X^{\infty}$ has similar characteristics but without any drift.

Next, assume that, conditioned on $\theta>0$, $\theta$ is exponentially distributed with parameter $\lambda>0$, i.e.
\begin{equation*}
\overline{P}^{G}(\theta\leq t) = G(t) = x + (1-x)(1-e^{-\lambda t}), \quad t\geq 0.
\end{equation*}
Then the generator of process $\pi$ according to (\ref{genFinal}) is equal to
\begin{equation}\label{genEx1}
\mathcal{A}f(x)=f'(x)\lambda(1-x)+\frac{1}{2}f''(x)x^2(1-x)^2\left(z_{r,1}^2\sigma_1^2+z_{r,2}^2\sigma_2^2+2z_{r,1}z_{r,2}\sigma_1\sigma_2\rho\right),
\end{equation}
where $z_{r,1}$ and $z_{r,2}$ solve the following system
\begin{equation*}
\left\{\begin{array}{l}
r_{1}=\sum_{j=1}^2z_{r,j}(\sigma\sigma^T)_{1,j}, \\
r_{2}=\sum_{j=1}^2z_{r,j}(\sigma\sigma^T)_{2,j}.
\end{array}\right.
\end{equation*}

Our goal is to solve the boundary value problem (\ref{system}) -- (\ref{smoothfit}) where generator $\mathcal{A}$ is given by  (\ref{genEx1}).
Note that the system (\ref{system}) takes now the following form
\begin{equation*}
\begin{split}
f'(x)\lambda(1-x)+\frac{1}{2}f''(x)x^2(1-x)^2\cdot B=-cx, \quad 0\leq x<A^*,\\
f(x)=1-x, \quad A^*\leq x \leq 1,
\end{split}
\end{equation*}
where
\begin{equation*}
B:=z_{r,1}^2\sigma_1^2+z_{r,2}^2\sigma_2^2+2z_{r,1}z_{r,2}\sigma_1\sigma_2\rho.
\end{equation*}
Observe that above equations allow us to refer to the classical Shiryaev problem, with our constant $B$ included. Hence, from Shiryaev \cite{shiryaev1961problem, shiryaev2006disorder} it follows that solution
of above equation is given by
\begin{equation*}
V^*(x)=\left\{\begin{array}{ll}1-A^*-\int_x^{A^*}y(s)\d s, & x\in[0,A^*) \\
1-x, & x\in[A^*,1],\end{array}\right.
\end{equation*}
where
\begin{equation*}
y(s)=-\frac{2c}{B}\int_0^se^{-\frac{2\lambda}{B}[Z(s)-Z(u)]}\frac{1}{u(1-u)^2}\d u
\end{equation*}
for
\begin{equation*}
Z(u)=\log\frac{u}{1-u}-\frac{1}{u}.
\end{equation*}
The exact values of function $y(x)$ can be found numerically, while the threshold $A^*$ can be found from the equation $y(A^*)=-1$, which is the boundary condition (\ref{smoothfit}).

\subsection{Two-dimensional Brownian motion with one-sided jumps}
 The second example concerns similar 2-dimensional Brownian motion model as in the previous example, but with additional exponential jumps. Assume that $\mu^{\infty},\mu^{0}>0$ and
\begin{equation}\label{expjump}
F^{\infty}(\d y) = \prod_{j=1}^2F_j^{\infty}(\d y) = \prod_{j=1}^2\frac{1}{w_j}e^{-y_j/w_j}\Ind{y_j\geq 0}\d y.
\end{equation}
In other words, jump sizes on each coordinate $j\in\{1,2\}$ of process $X^{\infty}$ are independent of each other and distributed exponentially with mean $w_j>0$. Additionally, we assume as in the previous example that
\begin{equation*}
\overline{P}^{G}(\theta\leq t) = G(t) = x + (1-x)(1-e^{-\lambda t}), \quad t\geq 0.
\end{equation*}

Jump distribution given by (\ref{expjump}) together with Theorem \ref{thm:Ltr} allows us to formulate the following lemma.

\begin{Lemma}\label{lem:Fr}
Assume that jump distribution $F^{\infty}$ of the process $X^{\infty}$ is given by (\ref{expjump}) and jump intensity is equal to $\mu^{\infty}$. Assume also that there exists a vector $z_{r}=(z_{r,1},\ldots,z_{r,d}$) satisfying the system (\ref{zsystem}) such that $(\forall_{1\leq j \leq 2})(|w_jz_{r,j}|<1)$. Then the following distribution function
$F^{r}$ and intensity $\mu^{0}$ satisfy the condition (\ref{jumprel}):
\begin{equation}\label{murFr}
\begin{split}
&F^{r}(\d y) = \prod_{j=1}^2\frac{1-w_jz_{r,j}}{w_j}e^{-y_j/(\frac{w_j}{1-w_jz_{r,j}})}\Ind{y_j\geq 0}\d y, \\
&\mu^{0} = \mu^{\infty}\prod_{j=1}^2\frac{1}{1-w_jz_{r,j}}.
\end{split}
\end{equation}
\end{Lemma}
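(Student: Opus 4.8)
The plan is to verify condition (\ref{jumprel}) directly by an exponential-tilting computation; no optimal stopping or free-boundary machinery enters here, only Theorem \ref{thm:Ltr} (which supplies the $z_{r,j}$ through the linear system (\ref{zsystem})). First I would observe that, because $h_r$ in (\ref{funh}) is a pure exponential, the likelihood ratio simplifies: for every $x\in\R^2$ one has $h_r(x+y)/h_r(x)=\exp\{\sum_{j=1}^2 z_{r,j}y_j\}$, so the $x$-dependence cancels identically and (\ref{jumprel}) collapses to the single requirement
\[
\mu^{0} F^{r}(\d y)=\exp\Big\{\sum_{j=1}^2 z_{r,j}y_j\Big\}\,\mu^{\infty} F^{\infty}(\d y).
\]

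Next I would substitute the explicit product form (\ref{expjump}) of $F^{\infty}$ and merge the exponentials coordinatewise. On coordinate $j$ the exponent becomes $z_{r,j}y_j-y_j/w_j=-y_j(1-w_jz_{r,j})/w_j$, so the right-hand side factorizes as
\[
\mu^{\infty}\prod_{j=1}^2\frac{1}{w_j}\exp\Big\{-\frac{1-w_jz_{r,j}}{w_j}\,y_j\Big\}\Ind{y_j\geq 0}\,\d y.
\]
The hypothesis $|w_jz_{r,j}|<1$ guarantees $1-w_jz_{r,j}>0$, so on setting $\widetilde{w}_j:=w_j/(1-w_jz_{r,j})>0$ this is an \emph{unnormalized} product of exponential densities with means $\widetilde{w}_j$.

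Finally I would read off the normalization. Writing $\frac{1}{w_j}=\frac{1}{1-w_jz_{r,j}}\cdot\frac{1}{\widetilde{w}_j}$, the coordinatewise constants $\prod_{j=1}^2(1-w_jz_{r,j})^{-1}$ pull out of the product and leave exactly the density claimed in (\ref{murFr}), each factor $\frac{1}{\widetilde{w}_j}e^{-y_j/\widetilde{w}_j}\Ind{y_j\geq 0}$ being a genuine probability density (it integrates to one). Matching the remaining free constant against $\mu^{0}F^{r}$ then forces $\mu^{0}=\mu^{\infty}\prod_{j=1}^2(1-w_jz_{r,j})^{-1}$, which is the second assertion.

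The computation is entirely routine; the only point that genuinely uses the assumption, and hence the only place one could slip, is the positivity $1-w_jz_{r,j}>0$, without which $\widetilde{w}_j$ fails to be a legitimate exponential mean and $F^{r}$ is not a probability distribution. So the \textbf{conceptual} content reduces to the single observation that exponential tilting maps a product-exponential law to another product-exponential law, with the tilt absorbed into the rates and the leftover mass collected into the post-change intensity $\mu^{0}$.
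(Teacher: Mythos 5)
Your proposal is correct and follows essentially the same route as the paper's proof: substitute $h_r(x+y)/h_r(x)=\exp\{\sum_j z_{r,j}y_j\}$ into (\ref{jumprel}), merge the exponentials coordinatewise, and factor out the normalizing constants $\prod_j(1-w_jz_{r,j})^{-1}$ to identify $\mu^{0}$ and $F^{r}$, with $|w_jz_{r,j}|<1$ ensuring $F^{r}$ is a genuine probability distribution. No discrepancies to report.
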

\begin{proof}
By the combination of (\ref{jumprel}) and (\ref{funh}) we obtain that
\begin{equation*}
\mu^{0}F^{r}(\d y) = e^{\sum_{j=1}^2z_{r,j}y_j}\mu^{\infty}F^{\infty}(\d y)
= \mu^{\infty}\prod_{j=1}^2\frac{1}{w_j}e^{-y_j/(\frac{w_j}{1-w_jz_{r,j}})}\Ind{y_j\geq 0}\d y,
\end{equation*}
which can be rearranged to
\begin{equation*}
\mu^{\infty}\prod_{j=1}^2\frac{1}{1-w_jz_{r,j}}\cdot \prod_{j=1}^2\frac{1-w_jz_{r,j}}{w_j}e^{-y_j/(\frac{w_j}{1-w_jz_{r,j}})}\Ind{y_j\geq 0}\d y.
\end{equation*}
Now it is sufficient to observe that above formula is equal to the product $\mu^{0} F^{r}$ given by (\ref{murFr}) and that $F^{r}$ is indeed a proper distribution by the assumption that $(\forall_{1\leq j \leq 2})$ $(|w_jz_{r,j}|<1)$.
\end{proof}

\begin{Rem}\rm
Considering the jump distributions $F^{\infty}$ and $F^{r}$ given by (\ref{expjump}) and (\ref{murFr}), the system (\ref{zsystem}) consists of equations
\begin{equation*}
r_{k}+\mu^{\infty}m_k^{\infty}-\mu^{0}m_k^{r}-\sum_{j=1}^2z_{r,j}(\sigma\sigma^T)_{k,j}=0, \quad k=1,\ldots,2,
\end{equation*}
where
\begin{equation*}
\mu^{\infty}m^{\infty}_k = \mu^{\infty}w_k
\end{equation*}
and
\begin{equation*}
\mu^{0}m^{r}_k = \mu^{\infty}\frac{w_k}{1-w_kz_{r,k}}\prod_{j=1}^2\frac{1}{1-w_jz_{r,j}}.
\end{equation*}
\end{Rem}

\begin{Rem}\label{rem:Fr}\rm
Distribution $F^{r}$ given by (\ref{murFr}) has similar characteristics to $F^{\infty}$. More precisely: jumps on both coordinates $X^{0}_{t,1}$ and $X^{0}_{t,2}$ are independent, exponentially distributed with means $\frac{w_1}{1-w_1z_{r,1}}$ and $\frac{w_2}{1-w_2z_{r,2}}$, respectively.
\end{Rem}

The generator $\mathcal{A}$ given by (\ref{genFinal}) for jump distributions specified above can be expressed as
\begin{equation}\label{genEx2}
\begin{split}
\mathcal{A}f(x)=&f'(x)\left( \lambda(1-x)+x(1-x)(\mu^{\infty}-\mu^{0})\right) \\
&+\frac{1}{2}f''(x)x^2(1-x)^2\left[z_{r,1}^2\sigma_1^2+z_{r,2}^2\sigma_2^2+2z_{r,1}z_{r,2}\sigma_1\sigma_2\rho\right]-f(x) \\
&+ \int_{[0,\infty)^2} f\left(\frac{x\exp\{\sum_{i=1}^2z_{r,i}y_i\}}{x(\exp\{\sum_{i=1}^2z_{r,i}y_i\}-1)+1}\right) \\
&\cdot\left[(1-x)\mu^{\infty}\prod_{j=1}^2\frac{1}{w_j}e^{-y_j/w_j}+x\mu^{0}\prod_{j=1}^2\frac{1-w_jz_{r,j}}{w_j}e^{-y_j/\frac{w_j}{1-w_jz_{r,j}}}\right]\d y.
\end{split}
\end{equation}
The integral part of $\mathcal{A}$ can be further simplified. For $\alpha_1,\alpha_2>0$ we define the following integrals
\begin{equation*}
I_+^{0}(x):=\int_{[0,\infty)^2}f\left(\frac{x\exp\{\sum_{i=1}^2z_{r,i}y_i^{0}\}}{x(\exp\{\sum_{i=1}^2z_{r,i}y_i^{0}\}-1)+1}\right)\prod_{j=1}^2\alpha_je^{-\alpha_jy_j}\d y
\end{equation*}
and
\begin{equation*}
I_-^{0}(x):=\int_{(-\infty,0]^2}f\left(\frac{x\exp\{\sum_{i=1}^2z_{r,i}y_i^{0}\}}{x(\exp\{\sum_{i=1}^2z_{r,i}y_i^{0}\}-1)+1}\right)\prod_{j=1}^2\alpha_je^{\alpha_jy_j}\d y.
\end{equation*}
\begin{Lemma}\label{lem:int}
Assume that $\alpha_1, \alpha_2, z_{r,1}, z_{r,2} > 0$ and $\frac{\alpha_1}{z_{r,1}}\neq\frac{\alpha_2}{z_{r,2}}$. Then for $x\in(0,1]$,
\begin{equation*}
\begin{split}
I_+^{0}(x)=f(x)&-\frac{\beta_1}{\beta_2-\beta_1}\left(\frac{1-x}{x}\right)^{-\beta_2}\int_x^1f'(v)\left(\frac{v}{1-v}\right)^{-\beta_2}\d v \\
& +\frac{\beta_2}{\beta_2-\beta_1}\left(\frac{1-x}{x}\right)^{-\beta_1}\int_x^1f'(v)\left(\frac{v}{1-v}\right)^{-\beta_1}\d v
\end{split}
\end{equation*}
for $\beta_1=\frac{\alpha_1}{z_{r,1}}$ and $\beta_2=\frac{\alpha_2}{z_{r,2}}$ and
\begin{equation*}
\begin{split}
I_-^{0}(x)=f(x)&+\frac{\beta_1}{\beta_2-\beta_1}\left(\frac{1-x}{x}\right)^{\beta_2}\int_0^xf'(v)\left(\frac{v}{1-v}\right)^{\beta_2}\d v \\
& - \frac{\beta_2}{\beta_2-\beta_1}\left(\frac{1-x}{x}\right)^{\beta_1}\int_0^xf'(v)\left(\frac{v}{1-v}\right)^{\beta_1}\d v.
\end{split}
\end{equation*}
\end{Lemma}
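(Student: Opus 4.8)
The plan is to reduce both two-dimensional integrals to one-dimensional ones and then integrate by parts. The crucial observation is that the argument of $f$ in both $I_+^{0}$ and $I_-^{0}$ depends on $(y_1,y_2)$ only through the single linear combination $s:=z_{r,1}y_1+z_{r,2}y_2$, and that in the ``odds'' coordinate the map $s\mapsto v:=\frac{xe^{s}}{x(e^{s}-1)+1}$ becomes affine, namely $\frac{v}{1-v}=\frac{x}{1-x}e^{s}$. First I would therefore record the law of the random variable $S:=z_{r,1}Y_1+z_{r,2}Y_2$ where, for $I_+^{0}$, the $Y_j$ are independent with density $\alpha_j e^{-\alpha_j y_j}$ on $[0,\infty)$. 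Since $z_{r,j}>0$, each $z_{r,j}Y_j$ is exponential with rate $\beta_j=\alpha_j/z_{r,j}$, so $S$ has the hypoexponential density $f_S(s)=\frac{\beta_1\beta_2}{\beta_2-\beta_1}\bigl(e^{-\beta_1 s}-e^{-\beta_2 s}\bigr)$ for $s\geq 0$. This is exactly the step that requires the standing hypothesis $\beta_1\neq\beta_2$, i.e. $\frac{\alpha_1}{z_{r,1}}\neq\frac{\alpha_2}{z_{r,2}}$.

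Then I rewrite $I_+^{0}(x)=\int_0^{\infty}f\bigl(v(s)\bigr)f_S(s)\,\d s$ and substitute $v$ for $s$. From $\frac{v}{1-v}=\frac{x}{1-x}e^{s}$ one gets $\d s=\frac{\d v}{v(1-v)}$, the limits $s=0,\infty$ become $v=x,1$, and $e^{-\beta_j s}=\bigl(\frac{1-x}{x}\bigr)^{-\beta_j}\bigl(\frac{v}{1-v}\bigr)^{-\beta_j}$. The resulting integrand pairs $f(v)$ with the kernel $\bigl(\frac{v}{1-v}\bigr)^{-\beta_j}\frac{1}{v(1-v)}$, and the key elementary identity is $\bigl(\frac{v}{1-v}\bigr)^{-\beta}\frac{1}{v(1-v)}=-\frac{1}{\beta}\frac{\d}{\d v}\bigl(\frac{v}{1-v}\bigr)^{-\beta}$, which lets me integrate by parts and trade $f(v)$ for $f'(v)$.

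The final step is to carry out the integration by parts on each of the two terms. The boundary contribution at $v=1$ vanishes because $\bigl(\frac{v}{1-v}\bigr)^{-\beta_j}\to 0$ there (here $f$ is continuous, hence bounded, on $[0,1]$), while the contribution at $v=x$ produces $\bigl(\frac{1-x}{x}\bigr)^{-\beta_j}\bigl(\frac{x}{1-x}\bigr)^{-\beta_j}f(x)=f(x)$ after the prefactor cancels. Collecting the two $f(x)$ terms with coefficients $\frac{\beta_2}{\beta_2-\beta_1}$ and $-\frac{\beta_1}{\beta_2-\beta_1}$ yields exactly $f(x)$, and the remaining $f'(v)$ integrals reproduce the two stated terms. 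The computation for $I_-^{0}$ is completely parallel: on $(-\infty,0]^2$ the $-Y_j$ are exponential, so $-S$ is hypoexponential, $v$ now ranges over $(0,x]$, the boundary term at $v=0$ vanishes because $\beta_j>0$, and the signs of the powers flip to $+\beta_j$, giving the second displayed formula. The only real subtlety — and the place to be careful — is the bookkeeping of signs and the cancellation of the $\bigl(\frac{1-x}{x}\bigr)^{\pm\beta_j}$ prefactors against the boundary evaluations, together with verifying that $\beta_1\neq\beta_2$ is precisely what makes the density, and hence the whole representation, well defined.
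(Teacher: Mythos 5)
Your proposal is correct and follows essentially the same route as the paper: both reduce the double integral to a one-dimensional expectation over the hypoexponential sum $S=z_{r,1}Y_1+z_{r,2}Y_2$ (which is where $\beta_1\neq\beta_2$ is used), and then combine the substitution $\frac{v}{1-v}=\frac{x}{1-x}e^{s}$ with integration by parts to trade $f$ for $f'$. The only difference is the order of these last two steps (you substitute first, the paper integrates by parts first), which is immaterial.
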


Using similar arguments like in Krawiec et al. \cite{krawiec2017quickest} that there exists unique solution of the system
equations \eqref{system}-\eqref{smoothfit}, hence whole estimation procedure can be applied.

\begin{Rem}\rm
In Lemma \ref{lem:int} we restrict calculations to the case $\beta_1\neq\beta_2$, but similar transformations of the integral may be made for the case $\beta_1=\beta_2$ as well. The difference will appear in the distribution of random variable $S^{0}$ present in the proof (see Section \ref{sec:6}) being the sum of two exponential random variables.
\end{Rem}

Denote $\gamma_i:=\frac{1}{z_{r,i}w_i}$ for $i\in\{1,2\}$. Then from Lemma \ref{lem:int}
the generator $\mathcal{A}$ given in (\ref{genEx2}) can be rewritten as follows
\begin{multline*}
\mathcal{A}f(x)=f(x)\left[(1-x)\mu^{\infty}+x\mu^{0}-1\right] + f'(x)\left( \lambda(1-x)+x(1-x)(\mu^{\infty}-\mu^{0})\right) \\
+\frac{1}{2}f''(x)x^2(1-x)^2\left[z_{r,1}^2\sigma_1^2+z_{r,2}^2\sigma_2^2+2z_{r,1}z_{r,2}\sigma_1\sigma_2\rho\right] \\
-(1-x)^{-\gamma_2+1}x^{\gamma_2}\int_x^1f'(v)\left[\mu^{\infty}\frac{\gamma_1}{\gamma_2-\gamma_1}\left(\frac{v}{1-v}\right)^{-\gamma_2}+\mu^{0}\frac{\gamma_1-1}{\gamma_2-\gamma_1}\left(\frac{v}{1-v}\right)^{-\gamma_2+1}\right]\d v \\
+(1-x)^{-\gamma_1+1}x^{\gamma_1}\int_x^1f'(v)\left[\mu^{\infty}\frac{\gamma_2}{\gamma_2-\gamma_1}\left(\frac{v}{1-v}\right)^{-\gamma_1}+\mu^{0}\frac{\gamma_2-1}{\gamma_2-\gamma_1}\left(\frac{v}{1-v}\right)^{-\gamma_1+1}\right]\d v.
\end{multline*}
Equation $\mathcal{A}f(x)=-cx$ in the free-boundary value problem  can be further simplified to get rid of the integrals and then solved numerically to find the threshold $A^*$. We believe that this particular case may be finally solved numerically in a similar way as in the numerical analysis described in Krawiec et al. \cite{krawiec2017quickest}, since here we obtain equation of the same order and similar characteristics. However, in this article we focus our applications on the previous example, which is used in practice in the next section.

\begin{Rem}\rm
The results of above example are derived under the assumption of positive exponential jumps. However, the whole analysis can be also conducted for negative exponential jumps, i.e. for the distribution
\begin{equation*}
F^{\infty}(\d y) = \prod_{j=1}^2\frac{1}{w_j}e^{y_j/w_j}\Ind{y_j\leq 0}\d y.
\end{equation*}
Then we can use part of Lemma \ref{lem:int} concerning $I_-^{0}(x)$ to derive the generator $\mathcal{A}$ given by
\begin{multline*}
\mathcal{A}f(x)=f(x)\left[(1-x)\mu^{\infty}+x\mu^{0}-1\right] + f'(x)\left( \lambda(1-x)+x(1-x)(\mu^{\infty}-\mu^{0})\right) \\
+\frac{1}{2}f''(x)x^2(1-x)^2\left[z_{r,1}^2\sigma_1^2+z_{r,2}^2\sigma_2^2+2z_{r,1}z_{r,2}\sigma_1\sigma_2\rho\right] \\
+(1-x)^{\gamma_2+1}x^{-\gamma_2}\int_0^xf'(v)\left[\mu^{\infty}\frac{\gamma_1}{\gamma_2-\gamma_1}\left(\frac{v}{1-v}\right)^{\gamma_2}+\mu^{0}\frac{\gamma_1+1}{\gamma_2-\gamma_1}\left(\frac{v}{1-v}\right)^{\gamma_2+1}\right]\d v \\
-(1-x)^{\gamma_1+1}x^{-\gamma_1}\int_0^xf'(v)\left[\mu^{\infty}\frac{\gamma_2}{\gamma_2-\gamma_1}\left(\frac{v}{1-v}\right)^{\gamma_1}+\mu^{0}\frac{\gamma_2+1}{\gamma_2-\gamma_1}\left(\frac{v}{1-v}\right)^{\gamma_1+1}\right]\d v.
\end{multline*}
\end{Rem}

\section{Application to the force of mortality}\label{sec:mortality}
Now we are going to give an important example of applications, which concerns modeling of the force of mortality process.
We will analyze the joint force of mortality for both men and women. We observe this process over the past decades and check if and when there have been significant changes of drift.

To achieve this goal, we introduce two-dimensional process of the force of mortality $\mu:=(\mu_t)_{t\geq 0}=((\mu_t^1,\mu_t^2))_{t\geq 0}$. We interpret this process as follows:
\begin{itemize}
\item the first coordinate $\mu_t^1$ represents force of mortality of men, while the second one $\mu_t^2$ represents force of mortality of women (of course they are correlated),
\item the time $t$ runs through consecutive years of life tables, e.g. if $t=0$ corresponds to the year 1990, then $t=10$ corresponds to the year 2000,
\item the age of people is fixed for a given process $\mu$, i.e. if $\mu_0$ concerns 50-year old men and women, then $\mu_{10}$ also concerns 50-year old men and women, but in another year.
\end{itemize}

The representation of the force of mortality process is given by
\begin{equation}\label{logmu}
\log\mu_t=\log\bar{\mu}_t+X_t,
\end{equation}
where $\log\bar{\mu}_t:=(\log \mu_t^1, \log \mu_t^2)$ is a deterministic part equal to
\begin{equation*}
\log\bar{\mu}_t=a_0+a_1t.
\end{equation*}
Above $a_0=(a_0^1, a_0^2)$ is a known initial force of mortality vector of men and women
and $a_1=(a_1^1, a_1^2)$ is a vector of a historical drift per one year.
It is worth to mention here that our model is similar to the Lee-Carter model (for fixed age $\omega$, cf. \cite{lee1992modeling} ):
\begin{equation*}
\log\mu_{\omega,t}=a_{\omega}+b_{\omega}k_t+\epsilon_{\omega,t},
\end{equation*}
where $a_{\omega}$ is a chosen number, $k_t$ is certain univariate time series and $\epsilon_{\omega,t}$ is a random error. However, Lee-Carter method focuses on modelling the deterministic part of the force of mortality, while our detection procedure concerns controlling the random perturbation in time, precisely the moment when it substantially changes. This model is univariate as well in contrast to our two-dimensional
mortality process.

In our numerical analysis the stochastic part $X_t$ will be modeled by the two-dimensional Brownian motion analyzed in Example \ref{Ex1}. We apply this model to the life tables downloaded from the Statistics Poland website \cite{gus}.

The first step concerns the model calibration.
We start with some historical values of the force of mortality $\hat{\mu}_0,\ldots,\hat{\mu}_n$, where each $\hat{\mu}_i=(\hat{\mu}_{i,1},\hat{\mu}_{i,2})$ is a two-dimensional vector (one coordinate for women and one for men).
We estimate $a_1$ as a mean value of log-increments of $\hat{\mu}_0,\ldots,\hat{\mu}_n$. Precisely,
\begin{equation*}
\hat{a}_1 := \frac{1}{n}\sum_{i=1}^ny_i,
\end{equation*}
where
\begin{equation*}
y_i := \log\hat{\mu}_i - \log\hat{\mu}_{i-1},\quad i=1,\ldots,n.
\end{equation*}

A little more attention is needed to calibrate the stochastic part $X$, which includes correlation.
Denote
\begin{equation}\label{cal:Xi}
\hat{X}_i = \log\hat{\mu}_i - a_0 - \hat{a}_1i, \quad i=0,\ldots,n
\end{equation}
and the increments
\begin{equation}\label{Xinc}
x_i:= \hat{X}_{i+1}-\hat{X}_i, \quad i=1,\ldots,n.
\end{equation}
We estimate $\sigma_1$ as a standard deviation of the vector $(x_{1,1},x_{2,1},\ldots,x_{n,1})$. Similarly, $\sigma_2$ is calculated as a standard deviation of a vector $(x_{1,2},x_{2,2},\ldots,x_{n,2})$. Finally, we calculate $\rho$ as the sample Pearson correlation coefficient of vectors  $(x_{1,1},x_{2,1},\ldots,x_{n,1})$ and $(x_{1,2},x_{2,2},\ldots,x_{n,2})$.

There are still some model parameters that have to be chosen a priori.
In particular, we have to declare the anticipated incoming drift $r$,
the probability $x=\pi_0=\overline{\P}^{G}(\theta=0)$ that the drift change occurs immediately, the parameter $\lambda>0$ of the exponential distribution of $\th$ and parameter $c$ present in criterion stated in the Problem \ref{eq:opt_time}.

We assume their values at the following level:
\begin{itemize}
\item $\lambda=0.1$. It is the reciprocal of the mean value of $\theta$ distribution conditioned to be strictly positive. Such choice reflects the expectation that the drift will change in $10$ years on average.
\item $x (=\overline{\P}^{G}(\theta=0))=0.1$. This parameter should be rather small (unless we expect the change of drift very quickly).
\item $c=0.1$. It is the weight of the mean delay time inside the optimality criterion stated in the Problem \ref{Prob:crit1}. It reflects how large delay we can accept comparing to the risk of false alarm. We have chosen rather small value and connected it to $\lambda$ by choosing $c=\lambda$.
\item Drift incoming after the moment $\th$ -- we have connected the anticipated value of $r$ to $\sigma$ by $r=(\sigma_1,\sigma_2)$. In practice we suggest to adjust the choice of $r$ to the analysis of sensitivity of e.g. price of an insurance contract.
\end{itemize}
In Table 1 
 we sum up all parameters that were used (both calibrated and arbitrary chosen ones)
in the numerical analysis. The calibration interval was set to years 1990 -- 2000.

\begin{table}[H]
\centering\label{tab_cal}
\begin{tabular}{|c|c|c||c|c|c|c|} \hline
 \multicolumn{3}{|c||}{calibrated} & \multicolumn{4}{c|}{arbitrary chosen} \\ \hline
 $\sigma_1$ & $\sigma_2$ & $\rho$ & c & $\lambda$ & $x$ & $r$ \\ \hline
 $0.03$ & $0.02$ & $0.33$ & $0.1$ & $0.1$ & $0.1$ & $(\sigma_1,\sigma_2)$  \\ \hline
\end{tabular}
 \caption{Parameters used to drift change detection}
\end{table}

\begin{figure}[H]
\includegraphics[width=0.68\textwidth]{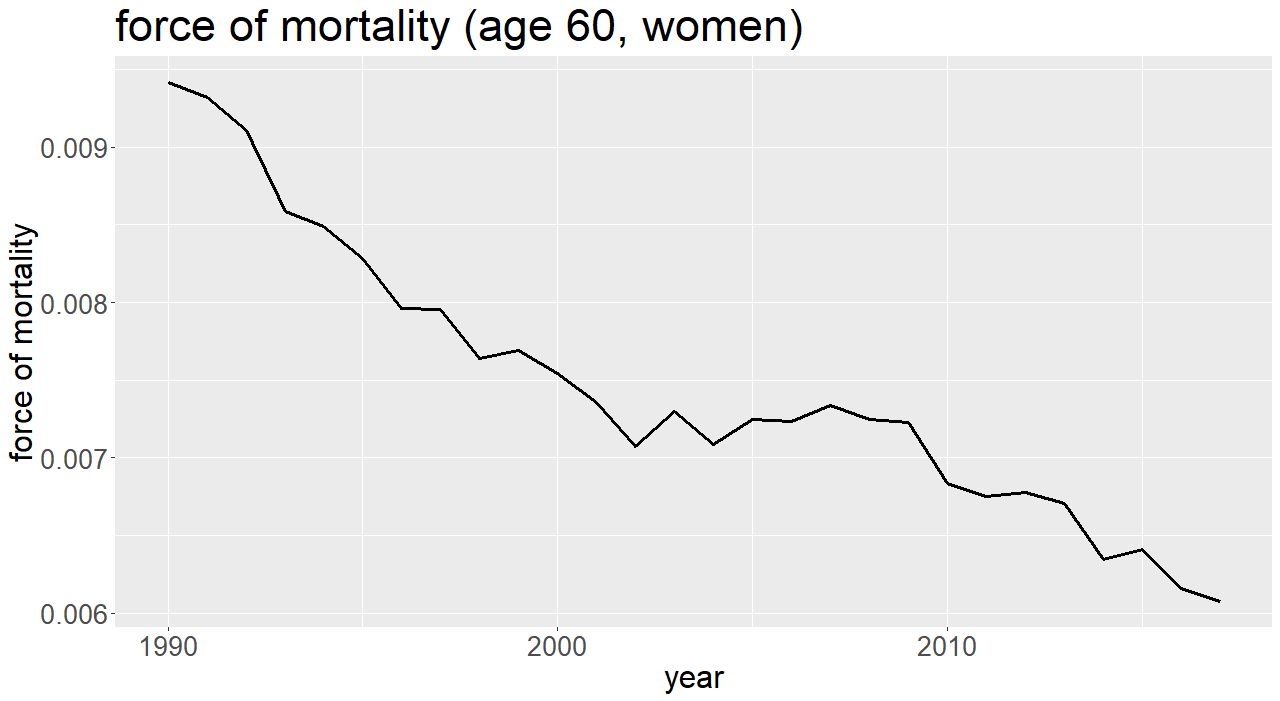}
\centering
\caption{Force of mortality of women aged 60 in 1990-2017}
\label{fig:mu_60_w}
\end{figure}

In the Figure \ref{fig:mu_60_w} we present exemplary plot of the force of mortality for women at age 60 through years 1990 -- 2017. Most of the time it is decreasing, but we can observe a stabilization period around years 2002 -- 2009.
According to (\ref{logmu}) we first take logarithm of the force of mortality, separate deterministic linear part and then model the remaining part by the process $X$ given by (\ref{ex1model}). Figure \ref{fig:brown_60_w} presents historical observations of this remaining part for the same data as in the Figure \ref{fig:mu_60_w}.

\begin{figure}[h]
\includegraphics[width=0.68\textwidth]{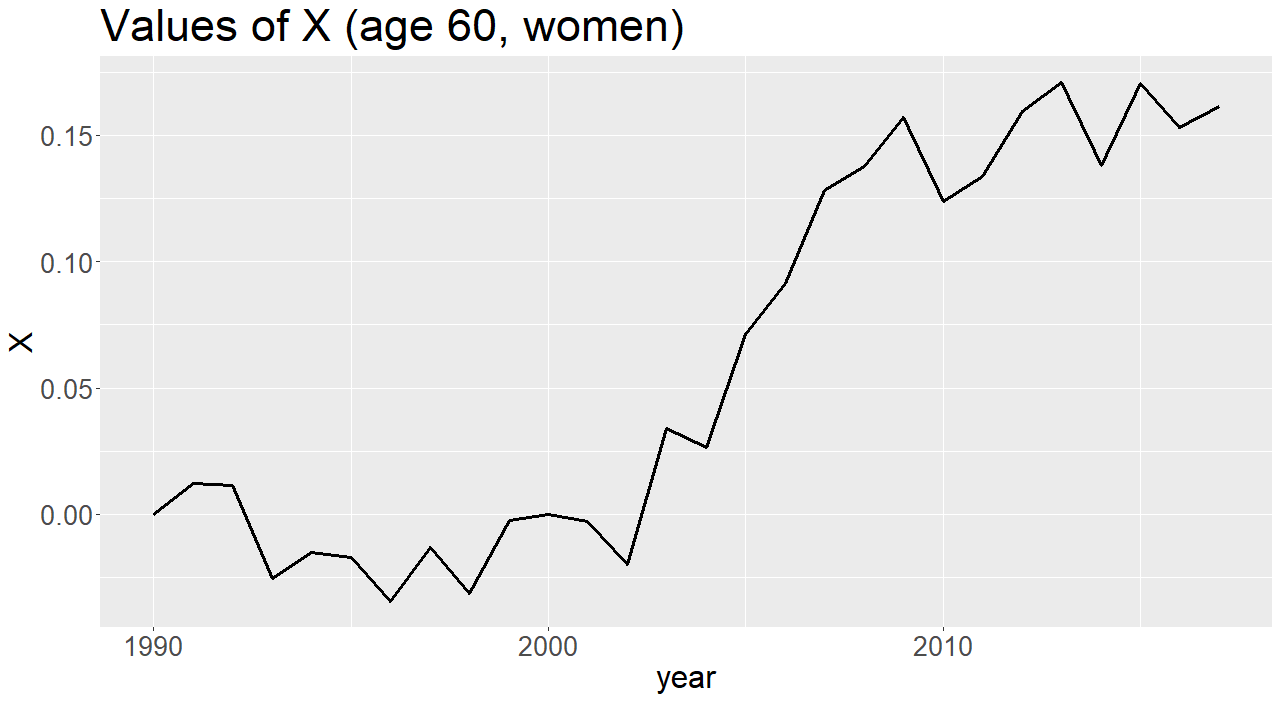}
\centering
\caption{Historical values of $X$ for women aged 60 in 1990-2017}
\label{fig:brown_60_w}
\end{figure}

\begin{figure}[h]
\includegraphics[width=0.48\textwidth]{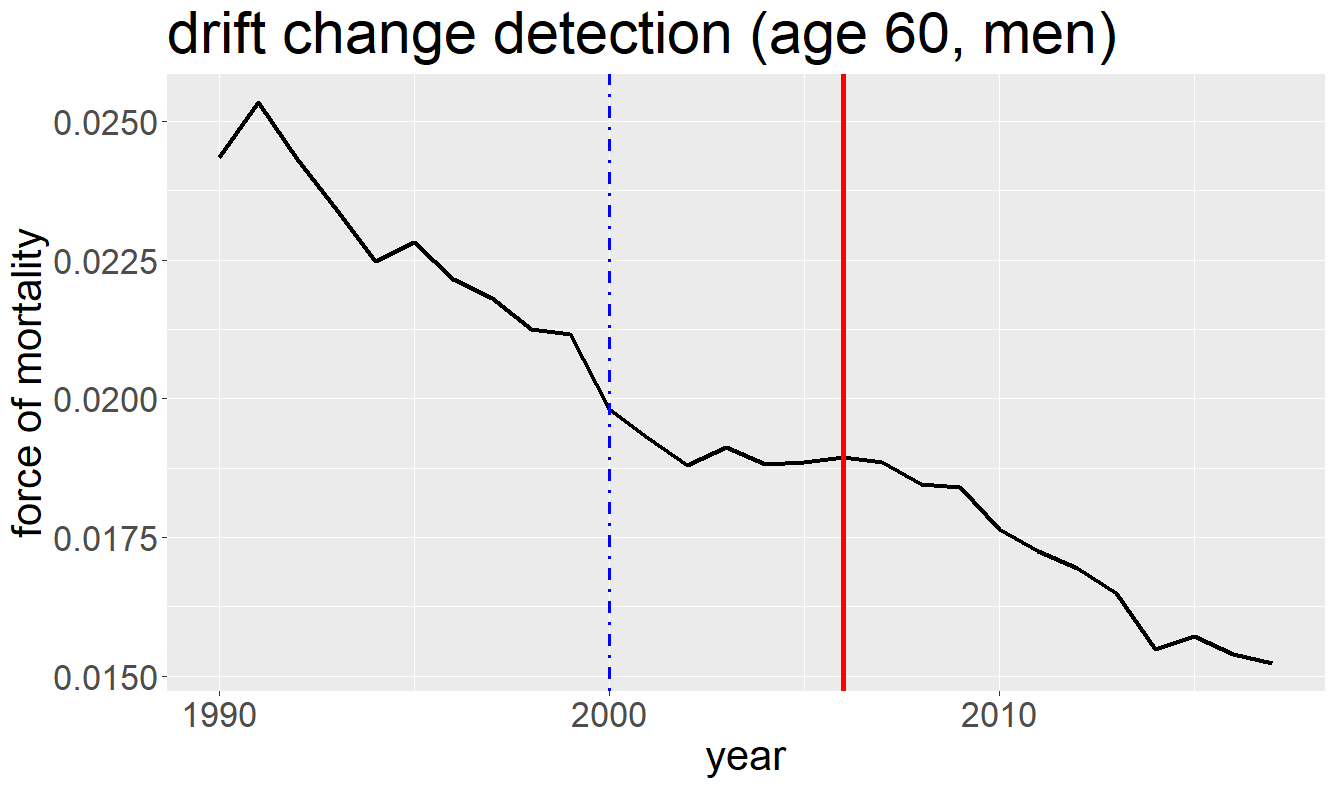}
\includegraphics[width=0.48\textwidth]{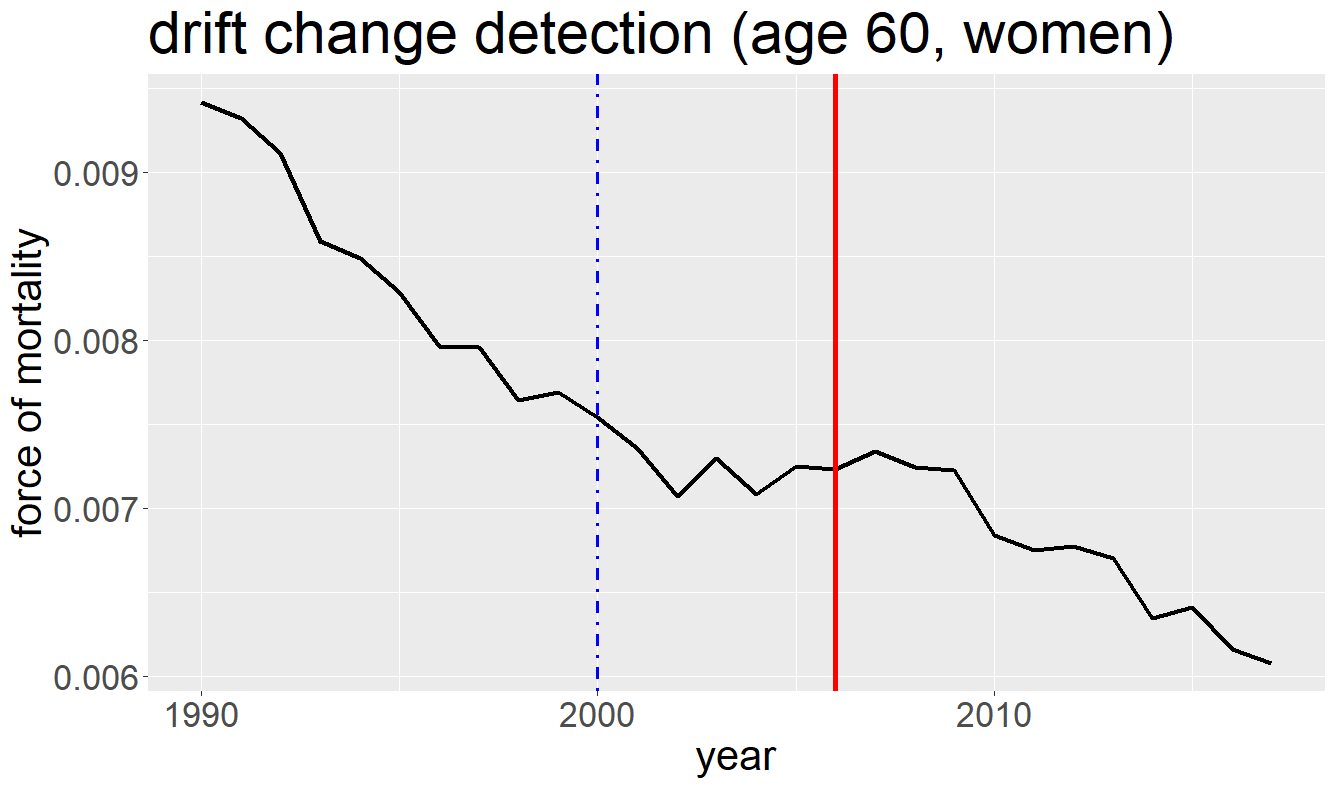}
\includegraphics[width=0.48\textwidth]{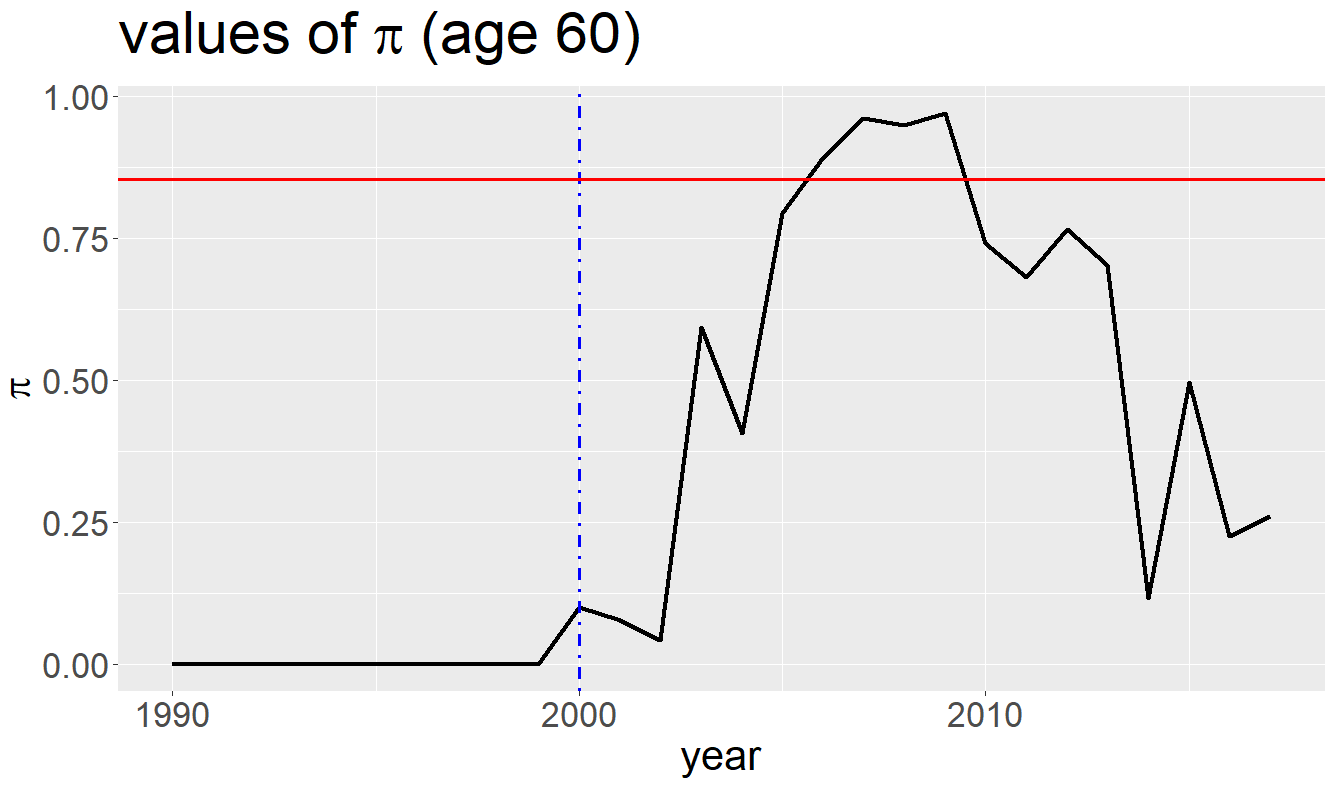}
\centering
\caption{Drift change detection jointly for men and women aged 60 in 1990-2017}
\label{fig:detection_60}
\end{figure}

The results of the detection algorithm for the force of mortality of 60-year old men and women jointly are presented in the Figure \ref{fig:detection_60}. The change of drift for given parameters was detected in year 2006 (red vertical line in the first two plots). The threshold $A^*$ for the optimal stopping time is here equal to $0.85$, which is indicated by the red horizontal line in the third plot presenting values of $\pi=(\pi_t)_{t\in\{1990,\ldots,2017\}}$. 

Note that calibration of parameters (including historical drift) has been done for interval 1990 -- 2000, when the force of mortality was mostly decreasing. After year 2002 it stayed at a stable level for several years, which was detected as a change of drift. This change of behavior is even more evident in the Figure \ref{fig:brown_60_w}, where we can observe that process $X$ is mostly increasing through the years 2002 -- 2009. This example shows that our detection method does not necessarily rise the alarm after the first observed deviation, but rather after it becomes more evident, that the change of drift actually has happened. Therefore, it copes well with cases of gradually changing drift, as long as eventually observed process significantly deviates from the model.

An important note need to be given at the end. This procedure is strongly dependent on parameters chosen to the model -- e.g. post-change drift vector $r$, which was chosen depending on $\sigma_1$ and $\sigma_2$, to give appropriate order of magnitude. A full analysis of the impact of individual parameters on the results would significantly extend this article. However, it may be the subject of further research in further articles, developing the applications of the detection method described here.

\section{Proofs}\label{sec:6}

\paragraph{\bf Proof of Lemma \ref{lem:crit}}
Note that
\eqn{\label{crit:p1}\overline{\P}^{G}(\tau<\th)=\overline{\E}^{G}_x[\overline{\E}^{G}_x[\Ind{\tau<\th}|\overline{\FF}_t]]=\overline{\E}^{G}_x[1-\overline{\P}^{G}(\th\leq\tau|\overline{\FF}_t)]=\overline{\E}^{G}_x[1-\pi_{\tau}].}
Moreover, observe that by  Tonelli's theorem we have:
\begin{equation}\label{crit:p2}\begin{split}
&\overline{\E}^{G}_x[(\tau-\th)^+] = \int_{\R_+}\overline{\E}^{G}_x[(t-\th)^+]\overline{\P}^{G}(\tau\in \d t) = \int_{\R_+}\overline{\E}^{G}_x\left[\int_0^t\Ind{\th\leq s}\d s\right]\overline{\P}^{G}(\tau\in \d t) \\
&= \int_{\R_+}\int_0^t\overline{\E}^{G}_x\left[\overline{\E}^{G}_x\left[\Ind{\th\leq s}|\overline{\FF}_t\right]\right]\d s\overline{\P}^{G}(\tau\in \d t) = \int_{\R_+}\int_0^t\overline{\E}^{G}_x[\pi_s]\d s\overline{\P}^{G}(\tau\in \d t) \\
&= \int_{\R_+}\overline{\E}^{G}_x\left[\int_0^t\pi_s\d s\right]\overline{\P}^{G}(\tau\in \d t) = \overline{\E}^{G}_x\left[\int_0^{\tau}\pi_s\d s\right].
\end{split}\end{equation}
Putting together (\ref{crit:p1}) and (\ref{crit:p2}) completes the proof.
\exit

\paragraph{\bf Proof of Theorem \ref{mainresult}}

We start from the observation that process $((s,\pi_s))_{s\ge 0}$ is Markov which follows from Theorem \ref{generator}.
Let
\begin{equation*}
V^*(t,x):=\inf_{\tau\in\TT, \tau\ge t}\Egh\left\{\left[1-\pi_{\tau}+c\int_0^{\tau}\pi_s\d s\right]\bigg|\pi_t=x\right\}.
\end{equation*}
Then $V^*(x)=V^*(0,x)$.
Moreover, for fixed $t\ge 0$ the optimal value function $V^*(t,x)$ is concave, which follows from \cite[Lem. 3]{krawiec2017quickest}  and the assumption that distribution function $G(t)$ of $\theta$ is continuous for $t>0$.
Observe that from Theorem \ref{generator} it follows that $((s,\pi_s))_{s\ge 0}$ is
stochastically continuous and thus function $(t, x)\rightarrow \Egh\left\{\left[1-\pi_{\tau}+c\int_0^{\tau}\pi_s\d s\right]\bigg|\pi_t=x\right\}$
is continuous for any fixed stopping time $\tau$. Thus from \cite[Rem. 2.10, p. 48]{peskir2006optimal}
we know that the value function $V^*(t,x)$ is lsc.
Let $$C=\{x: V^*(t,x)> 1-x\}$$ be
an open a continuation set
and $D = C^c$ be a stopping set.
From \cite[Cor. 2.9, p. 46]{peskir2006optimal}
we know that $C=[0,A^*(t))$ and that
the stopping rule given by
\begin{equation*}
\tau^*=\inf\{t\geq 0:\pi_t\in D\}
\end{equation*}
is optimal for Problem \ref{Prob:critExtended}.
Moreover, we have \begin{equation}\label{add_assumption}
\Px(\tau^*<\infty)=1
\end{equation}
and
by \cite[Chap. III]{peskir2006optimal}
the optimal value function $V^*(t,x)$ satisfies the following system
\begin{equation}\label{eq:sys_lem}
\left\{\begin{array}{ll}
(\frac{\partial}{\partial t}+\mathcal{A})V^*(t,x)=-cx,&(t,x)\in C,\\ V^*(t,x)=1-x,&(t,x)\in D,\end{array}\right.
\end{equation}
where $\mathcal{A}$ is a Dynkin generator.
Using the same arguments like in the proofs of \cite[Lem. 6 and Lem. 7]{krawiec2017quickest} we
can prove the boundary conditions $f_t(A^{*}(t)-)=1-A^*(t)$ and
$f^\prime_t(A^{*}(t)-)=-1$.

Finally, if $G$ is exponential, then $(\pi_s)_{s\ge 0}$ is Markov by Theorem \ref{generator}.
Now we can do the same arguments but taking simply $V^*(x)$ instead $V^*(t,x)$ above.
Moreover, \eqref{norment} follows from the same arguments like in the proof of \cite[Lem. 6]{krawiec2017quickest}.
This completes the proof.
\exit

\paragraph{\bf Proof of Theorem \ref{generator}}
First, we will find the SDE which is satisfied by process $\pi_t$. By the definition of the process $X$ for each $i=1,\ldots,d$, we get
\begin{equation*}
\d X_{t,i}=\sum_{j=1}^d\sigma_{ij}\d W_{t,j}+\Delta X_{t,i}+r_i\Ind{t\geq\theta}\d t - \left(\mu^{\infty}m_i^{\infty}\Ind{t<\theta}+\mu^{0}m_i^{0}\Ind{t\geq\theta}\right)\d t.
\end{equation*}
Denote the continuous part of the process by an additional upper index $c$. Then
\begin{equation*}
\d\left<X_{t,i}^{c},X_{t,k}^{c}\right> = \sum_{j=1}^d\sigma_{ij}\sigma_{kj}\d t = (\sigma\sigma^T)_{ik}\d t.
\end{equation*}
For the process $L_t$ given in (\ref{mainLtr}), by the It\^o's formula we obtain
\begin{equation*}
\begin{split}
\d L_t &= \left\{\mu^{\infty}-\mu^{0} + \sum_{i=1}^dz_{r,i}(r_i+\mu^{\infty}m_i^{\infty}-\mu^{0}m_i^{0})\Ind{\theta\leq t}\right\}L_t\d t \\
&+ \sum_{i=1}^dz_{r,i}\sum_{j=1}^d\sigma_{ij}L_t^{0} \d W_{t,j} + \Delta L_t,
\end{split}
\end{equation*}
where
\begin{equation*}
\Delta L_t=L_{t^-}\left(\frac{L_t}{L_{t^-}}-1\right) = L_{t^-}\left(e^{\sum_{i=1}^dz_{r,i}\Delta X_{t,i}}-1\right).
\end{equation*}
By (\ref{dpsi}) we conclude that
\begin{equation*}
\begin{split}
\d \psi_t &= \d G(t)+\left\{\mu^{\infty}-\mu^{0}+\sum_{i=1}^dz_{r,i}(r_i+\mu^{\infty}m_i^{\infty}-\mu^{0}m_i^{0})\Ind{\theta\leq t}\right\}\psi_t\d t \\
&+\sum_{i=1}^dz_{r,i}\sum_{j=1}^d\sigma_{ij}\psi_t\d W_{t,j}+\psi_{t^-}\left(e^{\sum_{i=1}^dz_{r,i}\Delta X_{t,i}}-1\right).
\end{split}
\end{equation*}
Recall that by (\ref{pirep3}) we have
\begin{equation*}
\pi_t=\frac{\psi_t}{\psi_t+1-G(t)}.
\end{equation*}
Then, using It\^o's formula once again we obtain
\begin{equation*}
\d \pi_t = \frac{\pi_t(1-\pi_t)}{1-G(t)}\d G(t)+\frac{(1-\pi_t)^2}{1-G(t)}\d\psi_t^{c} - \frac{(1-\pi_t)^3}{(1-G(t)^2}\d\left<\psi^{c},\psi^{c}\right>_t + \Delta\pi_t.
\end{equation*}
Moreover,
\begin{equation*}
\d\left<\psi^{c},\psi^{c}\right>_t=\sum_{i=1}^d\sum_{j=1}^dz_{r,i}z_{r,j}(\sigma\sigma^T)_{ij}(\psi_t)^2\d t.
\end{equation*}
Together with the system of equations (\ref{zsystem}) it produces
\begin{equation*}
\begin{split}
\d\pi_t &= \frac{1-\pi_t}{1-G(t)}\d G(t) + \frac{(1-\pi_t)^2}{1-G(t)}(\mu^{\infty}-\mu)\psi_t\d t \\
&+ \frac{(1-\pi_t)^2}{1-G(t)}\sum_{i=1}^dz_{r,i}\sum_{j=1}^d\sigma_{ij}\psi_t\d W_{t,j} \\
& + \frac{(1-\pi_t)^2}{1-G(t)}\sum_{i=1}^d\sum_{j=1}^dz_{r,i}z_{r,j}(\sigma\sigma^T)_{ij}\Ind{\theta\leq t}\psi_t\d t \\
&- \frac{(1-\pi_t)^3}{(1-G(t))^2}\sum_{i=1}^d\sum_{j=1}^dz_{r,i}z_{r,j}(\sigma\sigma^T)_{ij}(\psi_t)^2\d t + \Delta\pi_t.
\end{split}
\end{equation*}
Jump part of $\pi_t$ equals
\begin{equation*}
\Delta\pi_t = \pi_{t-}\left(\frac{\psi_t}{\psi_{t-}}\frac{\psi_{t-}+1-G(t)}{\psi_t+1-G(t)}-1\right) = \frac{\pi_{t-}\left(\exp\{\sum_{i=1}^dz_{r,i}\Delta X_{t,i}\}-1\right)(1-G(t))}{\psi_{t-}\exp\{\sum_{i=1}^dz_{r,i}\Delta X_{t,i}\}+1-G(t)}.
\end{equation*}
Using the It\^o's formula one more time completes the proof.
\exit

\paragraph{\bf Proof of Theorem \ref{thm:Ltr}}
The proof is based on the technique of exponential change of measure described in Palmowski and Rolski \cite{palmowski2002technique}.

Firstly, we will prove that the process $(L_t)_{t\geq 0}$ satisfies the following representation
\begin{equation}\label{Ltrgeneral}
L_t=\frac{h(X_t)}{h(X_0)}\exp\left(-\int_0^t\frac{(\mathcal{A}^{\infty}h)(X_s)}{h(X_s)}\d s\right)
\end{equation}
for the function $h(x):=h_r(x)$ given in (\ref{funh}), where $\mathcal{A}^{\infty}$ is an extended generator of the process $X$ under $\P^{\infty}$
and $h$ is in its domain since it is twice continuously differentiable.
Then from Theorem 4.2 by Palmowski and Rolski \cite{palmowski2002technique} it follows that the generator of $X$ under $\P^{0}$ is related with $\mathcal{A}^{\infty}$ by
\begin{equation}\label{geneq}
\mathcal{A}^{0} f=\frac{1}{h}\left[\mathcal{A}^{\infty}(fh)-f\mathcal{A}^{\infty}h\right].
\end{equation}

On the other hand, from the definition of the infinitesimal generator or using the Theorem 31.5 in Sato \cite{Sato} it follows that for twice continuously differentiable function $f(x_1,\ldots,x_d):\R^d\to\R$ generators $\mathcal{A}^{\infty}$ and $\mathcal{A}^{0}$ are given by
\begin{equation}\label{geninf}
\begin{split}
\mathcal{A}^{\infty}f(x)&=\frac{1}{2}\sum_{i=1}^d\sum_{j=1}^d\frac{\partial^2f}{\partial x_i\partial x_j}(x)(\sigma\sigma^T)_{i,j}-\sum_{i=1}^d\frac{\partial f}{\partial x_i}(x)\mu^{\infty}m^{\infty}_i  \\
&+ \int_{\R^d}\left(f(x+y)-f(x)\right)\mu^{\infty} F^{\infty}(\d y),
\end{split}
\end{equation}
\begin{equation}\label{genr}
\begin{split}
\mathcal{A}^{0}f(x)&=\frac{1}{2}\sum_{i=1}^d\sum_{j=1}^d\frac{\partial^2f}{\partial x_i\partial x_j}(x)(\sigma\sigma^T)_{i,j}-\sum_{i=1}^d\frac{\partial f}{\partial x_i}(x)\left(\mu^{0}m^{0}_i-r_i\right) \\
&+ \int_{\R^d}\left(f(x+y)-f(x)\right)\mu^{0} F^{0}(\d y).
\end{split}
\end{equation}
For $h_r(x)$ given by (\ref{funh}) we obtain
\begin{equation*}
\frac{h_r(X_t)}{h_r(X_0)}=\exp\left\{\sum_{j=1}^dz_{r,j}\left(X_{t,j}-X_{0,j}\right)\right\}.
\end{equation*}
Further, since
\begin{equation*}
\frac{\partial h_r}{\partial x_i}=z_{r,i}h_r
\end{equation*}
and
\begin{multline*}
\int_{\R^d}\frac{h(X_s+y)-h(X_s)}{h(X_s)}\mu^{\infty}F^{\infty}(dy)=\int_{\R^d}\mu^{0}F^{0}(dy)-\int_{\R^d}\mu^{\infty}F^{\infty}(dy)
= \mu^{0}-\mu^{\infty},
\end{multline*}
then
\begin{multline*}
\frac{(\mathcal{A}^{\infty}h_r)(X_s)}{h(X_s)}=\frac{1}{2}\sum_{i=1}^d\sum_{j=1}^d z_{r,i}z_{r,j}(\sigma\sigma^T)_{i,j}-\sum_{i=1}^dz_{r,i}\mu^{\infty}m^{\infty}_i + \mu^{0}-\mu^{\infty}=K_r.
\end{multline*}
Hence, we obtain
\begin{equation*}
\begin{split}
L_t^{0}&=\exp\left\{\sum_{j=1}^dz_{r,j}(X_{t,j}-X_{0,j})\right\}\cdot \exp\left\{-\int_0^tK_r\d s\right\}\\
&= \exp\left\{\sum_{j=1}^dz_{r,j}(X_{t,j}-X_{0,j})-K_rt\right\}
\end{split}
\end{equation*}
and thus $L_t^{0}$ given in (\ref{mainLtr}) indeed satisfies the representation (\ref{Ltrgeneral}) for function $h_r(x)$ given by (\ref{funh}).

To finish the proof it is sufficient to show that the generator $\mathcal{A}^{0}$ given by (\ref{genr}) indeed coincides with the generator given in (\ref{geneq}) for $h(x)=h_r(x)$. First, by (\ref{Kr}) we get
\begin{equation*}
\frac{1}{h}\left[\mathcal{A}^{\infty}(fh)-f\mathcal{A}^{\infty}h\right]=\frac{\mathcal{A}^{\infty}(fh)}{h}-\frac{f\mathcal{A}^{\infty}h}{h}=\frac{\mathcal{A}^{\infty}(fh)}{h}-fK_r.
\end{equation*}
Second, (\ref{geninf}) produces
\begin{multline*}
\frac{\mathcal{A}^{\infty}(fh)}{h}=\frac{1}{2}\sum_{i=1}^d\sum_{j=1}^d\left(\frac{\partial^2f}{\partial x_i\partial x_j}+\frac{\partial f}{\partial x_i}z_{r,j}+\frac{\partial f}{\partial x_j}z_{r,i}+fz_{r,j}z_{r,i}\right)(\sigma\sigma^T)_{i,j} \\
- \sum_{i=1}^d\left(\frac{\partial f}{\partial x_i}+fz_{r,i}\right)\mu^{\infty}m^{\infty}_i + \int_{\R^d}\frac{f(x+y)h(x+y)-f(x)h(x)}{h(x)}\mu^{\infty}F^{\infty}(dy).
\end{multline*}
Hence
\begin{multline*}
\frac{\mathcal{A}^{\infty}(fh)}{h}-fK_r=\frac{1}{2}\sum_{i=1}^d\sum_{j=1}^d\frac{\partial^2f}{\partial x_i\partial x_j}(\sigma\sigma^T)_{i,j} \\
+\sum_{i=1}^d\frac{\partial f}{\partial x_i}\sum_{j=1}^d\left(z_{r,j}(\sigma\sigma^T)_{i,j}-\mu^{\infty}m^{\infty}_i\right) + \int_{\R^d}(f(x+y)-f(x))\mu^{0}F^{0}(dy).
\end{multline*}
Finally, using the system of equations (\ref{zsystem}) completes the proof.
\exit

\paragraph{\bf Proof of Lemma \ref{lem:int}}
First observe that $I_+^{0}(x)$ is equal to the expectation
\begin{equation}\label{eq:expI+}
\mathbb{E}\left[f\left(\frac{x\exp\{\sum_{i=1}^2z_{r,i}T_i\}}{x(\exp\{\sum_{i=1}^2z_{r,i}T_i\}-1)+1}\right)\right],
\end{equation}
where $T_1$ and $T_2$ are two independent random variables with exponential distributions ${\rm Exp}(\alpha_1)$ and ${\rm Exp}(\alpha_2)$, respectively. Then $z_{r,1}T_1\sim {\rm Exp}\left(\frac{\alpha_1}{z_{r,1}}\right)$, $z_{r,2}T_2\sim {\rm Exp}\left(\frac{\alpha_2}{z_{r,2}}\right)$ and the density of
$S^{0}:=\sum_{i=1}^2z_{r,i}T_i$ is given by
\begin{equation*}
f_{S^{0}}(y)=\frac{\beta_1\beta_2}{\beta_1-\beta_2}\left(e^{-\beta_2y}-e^{-\beta_1y}\right)\Ind{y\geq 0}\d y.
\end{equation*}
Hence, the expectation (\ref{eq:expI+}) equals
\begin{equation*}
\mathbb{E}\left[f\left(\frac{xe^{S^{0}}}{x(e^{S^{0}}-1)+1}\right)\right] = \int_0^{\infty}f\left(\frac{xe^y}{x(e^y-1)+1}\right)\frac{\beta_1\beta_2}{\beta_1-\beta_2}\left(e^{-\beta_2y}-e^{-\beta_1y}\right)\d y.
\end{equation*}
Next, we can integrate above integral by parts to obtain
\begin{eqnarray*}
\lefteqn{f(x)-\int_0^{\infty}f'\left(\frac{xe^y}{x(e^y-1)+1}\right)\frac{\d}{\d y}\left(\frac{xe^y}{x(e^y-1)+1}\right)}
\\&&\qquad \cdot \left(\frac{\beta_1}{\beta_2-\beta_1}e^{-\beta_2y}-\frac{\beta_2}{\beta_2-\beta_1}e^{-\beta_1y}\right)\d y
\end{eqnarray*}
and by substitution $v:=\frac{xe^y}{x(e^y-1)+1}$ (hence $y=\ln\left(\frac{v(1-x)}{x(1-v)}\right)$) we derive
\begin{equation*}
\begin{split}
f(x)&-\frac{\beta_1}{\beta_2-\beta_1}\left(\frac{1-x}{x}\right)^{-\beta_2}\int_x^1f'(v)\left(\frac{v}{1-v}\right)^{-\beta_2}\d v \\ &+\frac{\beta_2}{\beta_2-\beta_1}\left(\frac{1-x}{x}\right)^{-\beta_1}\int_x^1f'(v)\left(\frac{v}{1-v}\right)^{-\beta_1}\d v,
\end{split}
\end{equation*}
which completes the first part of the proof.

The formula for $I_-^{0}(x)$ can be derived by substitution $u:=-y$ to get
\begin{equation*}
I_-^{0}(x)=\int_{(0,\infty)^2}f\left(\frac{x\exp\{-\sum_{i=1}^2z_{r,i}u_i^{0}\}}{x(\exp\{-\sum_{i=1}^2z_{r,i}u_i^{0}\}-1)+1}\right)\prod_{j=1}^2\alpha_je^{-\alpha_ju_i^{0}}\d y,
\end{equation*}
which, by the same arguments as for $I_+^{0}(x)$, is equal to
\begin{equation*}
\mathbb{E}\left[f\left(\frac{xe^{-S^{0}}}{x(e^{-S^{0}}-1)+1}\right)\right] = \int_0^{\infty}f\left(\frac{xe^{-y}}{x(e^{-y}-1)+1}\right)\frac{\beta_1\beta_2}{\beta_1-\beta_2}\left(e^{-\beta_2y}-e^{-\beta_1y}\right)\d y.
\end{equation*}
Integration by parts together with substitution of $v:=\frac{xe^{-y}}{x(e^{-y}-1)+1}$ (hence $y=-\ln\left(\frac{v(1-x)}{x(1-v)}\right)$) gives
\begin{equation*}
\begin{split}
f(x)&+\frac{\beta_1}{\beta_2-\beta_1}\left(\frac{1-x}{x}\right)^{\beta_2}\int_0^xf'(v)\left(\frac{v}{1-v}\right)^{\beta_2}\d v \\
& - \frac{\beta_2}{\beta_2-\beta_1}\left(\frac{1-x}{x}\right)^{\beta_1}\int_0^xf'(v)\left(\frac{v}{1-v}\right)^{\beta_1}\d v
\end{split}
\end{equation*}
which completes the second part of the proof.
\exit

\paragraph{\bf DATA AVAILABILITY STATEMENT}
The datasets analysed during the current study are available in the
repositories {\it http://www.mortality.org/} and {\it stat.gov.pl/en/topics/population/life-expectancy/life-expectancy-in-poland,1,3.html}.

\end{document}